\documentclass[a4paper,12pt,oneside,openbib]{article}

\usepackage{mathrsfs}
\usepackage{amsmath}
\usepackage{moreverb}
\usepackage{vmargin}
\usepackage{pdfpages}
\usepackage{bm}
\usepackage{bbm}
\usepackage{latexsym,amsfonts,amssymb,amscd}
\usepackage[T1]{fontenc}
\usepackage{amsthm}
\usepackage{tcolorbox}
\usepackage{dsfont} 
\usepackage{etoolbox}
\usepackage[square, numbers, comma, sort&compress]{natbib} 
\usepackage{verbatim}
\usepackage{bm}
\usepackage[T1]{fontenc}
\usepackage{tikz}
\usepackage[english]{babel}
\usepackage{pdfpages} 
\usepackage{moreverb}
\usepackage{vmargin}
\usepackage{tcolorbox}
\usepackage{abstract}
\numberwithin{equation}{section} 
\newcommand{\RomanNumeralCaps}[1]
 {\MakeUppercase{\romannumeral #1}}
 \newcommand{\R}{{ I\!\!R}}
 \newcommand{\N}{{I\!\!N}}

\newtheorem{theorem}{Theorem}[section]
\newtheorem{corollary}[theorem]{Corollary}
\newtheorem{lemma}[theorem]{Lemma}
\newtheorem{proposition}[theorem]{Proposition}
\newtheorem{definition}[theorem]{Definition}
\newtheorem{remark}[theorem]{Remark}

\usepackage{authblk}

\title{\bf On the Uniqueness and Orbital Stability of Slow and Fast Solitary Wave Solutions of the Benjamin Equation}

\date{}

\author[1]{May ABDALLAH 
\thanks{May\textunderscore{}i\textunderscore{}abdallah@outlook.com}}

\author[2]{Mohamad DARWICH \thanks{Mohamad.Darwich@lmpt.univ-tours.fr}}

\author[3] {Luc MOLINET \thanks{Luc.Molinet@lmpt.univ-tours.fr}}

\affil[1,3] {Institut Denis Poisson, Universit\'e de Tours, Universit\'e d'Orl\'eans, CNRS, Parc Grandmont, 37200 Tours, France}
\affil[1,2] {Laboratory of Mathematics-EDST, Department of Mathematics, Faculty of Sciences \RomanNumeralCaps{1}, Lebanese University, Beirut, Lebanon}

\begin{document}

\maketitle

\begin{abstract}
This paper is devoted to the study of existence and properties of solitary waves of the Benjamin equation. The studied equation includes a parameter $\gamma$ in front of the Benjamin-Ono term. We show the existence, uniqueness, decay and orbital stability of solitary wave solutions obtained as a solution to a certain minimization problem, associated either with high speeds without a sign condition on the parameter $\gamma$ or with low speeds for the appropriate sign.
\end{abstract}
\vspace{0.3cm}
\hspace{0.8cm}
{\textbf{Keywords}}: Solitary waves - Benjamin equation - Korteweg-de Vries equation \\
\hspace*{3cm} -Constraint Minimization Problem - Orbital Stability
\
\section{Introduction}\label{section 1}

We consider the Benjamin equation 
\begin{equation}\label{maineq}
\partial_t u +u_{xxx}  +  \gamma \, \mathcal{H}   u_{xx} + u \partial_x u = 0, \quad (t,x)\in \R^2 ,
\end{equation}
where $ u $ is a real-valued function, $\mathcal{H}$ denotes the Hilbert transform and  $ \gamma\in \R^*$.
Here, the Hilbert transform $ \mathcal{H} $ is defined by 
\begin{equation} \label{hilbert}
 \mathcal{H} v\,  (x) = p.v. \frac{1}{\pi} \int_{\mathbb{R}} \frac{v(y)}{y-x} \ dy = \frac{1}{\pi} \lim_{\varepsilon \rightarrow 0} \int_{|y-x|> \varepsilon} \frac{v(y)}{y-x} \ dy .
\end{equation}
With this notation, we note that $\displaystyle \int u_x \mathcal{H} u = \int |D_x^\frac{1}{2} u|^2 = \|u\|_{\dot{H}^\frac{1}{2}}^2$. 
Moreover, $\mathcal{H}$ can be written as  a Fourier multiplier operator given by $$\widehat{\mathcal{H}(u)}(\xi) = i \ \text{sgn} (\xi) \ \hat{u}(\xi) \; .
$$
 In the case $ \gamma<0 $, this equation has been derived in \cite{Benjamin 1992} (see also  \cite {Benjamin 1996} and \cite{ABR}) as an asymptotic model for  the propagation    of long interfacial waves when the surface tension cannot be safely ignored. In this context,  $ u $ represents the vertical displacement of the interface between the two fluids. Actually, physically, it could be reduced to the Korteweg-de Vries (KdV) equation
$$ u_t +  u_{xxx} +  u u_x =0$$
when the dispersive effects are dominated by the surface tension, and to the Benjamin-Ono (BO) equation 
$$ u_t - \mathcal{H} u_{xx} + u u_x =0$$
when the surface tension is negligible.

Equation \eqref{maineq} enjoys the following conservation laws
\begin{equation}\label{energy}
\text{Energy:} \quad E(u(t)) = \int_{\mathbb{R}} \bigg(  \frac{1}{2} (\partial_x u)^2+ \frac{\gamma}{2} (D_x^{\frac{1}{2}} u )^2 - \frac{1}{6} u^3 \bigg) dx \quad  = E(u(0)),
\end{equation}
and 
$$ \text{Mass:} \quad M(u(t)) = \frac{1}{2} \ \int_{\mathbb{R}} u^2 dx \quad = M(u(0)),$$  and reduces to KdV when $ \gamma=0 $. It is worth noticing that the physical case, associated with the condition $ \gamma<0 $, is  where the KdV operator $ \partial_x^3 $ and the Benjamin-Ono (BO) operator $\mathcal{H} \partial_x^2 $ compete together. In particular, the quadratic part of the energy is then not positively defined. Note also that, whereas  the KdV and BO equations are completely integrable systems,  the Benjamin equation is not known to be integrable.

 It is well known that the Cauchy problem associated with \eqref{maineq} is globally well-posed in the energy space $H^1(\mathbb{R})$ (see \cite{Kenig Ponce Vega 1991}) and even in $ H^s(\mathbb{R}) $ for $s\ge 0$ (see \cite{linares}). In this work, we are mainly interested in the existence and properties of traveling wave solutions of \eqref{maineq}, that is, solutions of the form 
\begin{equation}\label{twsol}
u(x,t) = \varphi(x-ct) 
\end{equation}
with $ c>0 $ and $ \varphi (x)\to 0 $ as $ |x|\to +\infty$. Substituting \eqref{twsol} into equation \eqref{maineq}, followed by integration on $\mathbb{R}$, we infer that  such profiles $ \varphi $ satisfy
\begin{equation}\label{Phimaineq}
c\ \varphi - \ \varphi'' + \gamma \mathcal{H} \varphi' = \frac{1}{2} \varphi ^2.
\end{equation}

The existence, spatial asymptotics and orbital stability of solitary waves to \eqref{maineq} have already been studied in several papers. In \cite{Benjamin 1992} and \cite{Benjamin 1996}, a Leray-Schauder degree theory approach is used to show the existence of solitary wave profiles to \eqref{maineq} with $\gamma<0 $ for a speed $ c >\gamma^2/4 $. In \cite{ABR}, the implicit function theorem together with the existence and stability result in the case $ \gamma=0 $ (KdV equation) is used to prove the existence and uniqueness of  even  solitary wave profiles  to \eqref{maineq} for  $ |\gamma| $ small enough. This argument together with classical perturbation theory and the criterion for orbital stability established in  the seminal paper \cite{Bona et al 1987} enables then to obtain the orbital stability of these solitary waves for $ |\gamma| $ small enough. Finally,  variational approaches are used in \cite{bonachen} and  \cite{Angulo} to prove the existence of solitary waves for 
 $ c>\gamma^2/4$ whenever $ \gamma<0 $. In \cite{bonachen}, an optimal  decay of the solitary waves is also provided whereas in \cite{Angulo} the stability of the set of ground states is established for the same range of speed. 
 
 Our goal in this work is to complete  these results in some directions and even give another proof for some of them. We first prove the existence of solitary wave profiles to \eqref{maineq} for $ c> \gamma^2/4 $ in the case $ \gamma<0 $ and for $c>0 $ in the case $ \gamma>0 $. To do this, we use a variational approach as in \cite{Angulo} but instead of minimizing the Energy with the Mass as a constraint, following \cite{bonachen} (see also \cite{Levandosky 1999}), we minimize the quadratic part of the action functional $ E+c M $ under the constraint of its cubic part. This has the advantage of being really simpler but has the disadvantage of not directly leading to the orbital stability of the set of ground states. Actually in this work, we consider only the uniqueness and orbital stability for fast or slow solitary waves. First, by a dilation argument, we notice that this is equivalent to looking at solutions of \eqref{Phimaineq2c} below with either $ \alpha=1 $ and small $ |\beta| $, or $\beta=1 $ and small $ \alpha>0 $. Then, we follow  the ideas of \cite{Kenig Martel Robbiano 2011}. We prove the convergence of our ground state solutions of  \eqref{Phimaineq2} to the KdV soliton profile of speed $1$ in  case $ \alpha=1 $ and $ \beta\to 0 $, and to the BO soliton profile of speed $ 1$ in case $\beta=1 $ and $ \alpha\searrow 0 $. Later, we establish the uniqueness and orbital stability in $H^1(\mathbb{R}) $ of our ground state solutions in both cases when either $ \beta  $ or $\alpha>0$ are close enough to $ 0 $ by a perturbation argument making use of known results on the coercivity of the second derivative of the action functional associated respectively with the KdV and the BO equation. Note that this type of approaches can also be found in \cite{Kabakouala Molinet 2018} and \cite{Darwich 2019}. Finally, we follow arguments developed in \cite{bonali} to prove decay estimates for even  $ H^1$-solutions to \eqref{Phimaineq} as well as their first derivatives. Briefly, compared to existing results, our main  contributions  are essentially  the uniqueness and orbital stability of the slow ground state solutions in the case $ \gamma>0 $. Note also that, in contrary to the arguments in \cite{ABR},  our arguments have the advantage of being applicable for both slow and fast solitary waves.
 
   To end this introduction, we would like to emphasize that, as  noticed in \cite{Klein et al 2023}, classical Pohozaev identities lead to the non existence of solitary wave with speed $ c\le -\gamma^2/5$ in the case $ \gamma<0 $ (and $c\le 0 $ in the case $ \gamma>0$). Therefore, the question of existence of solitary waves to \eqref{maineq} with $\gamma<0 $ and speed $c\in ] -\gamma^2/5,\gamma^2/4]$ is still open.

\subsection{Main Results}

By using dilation symmetries, we will connect the study of  \eqref{maineq} to the study of the following equation 
\begin{equation}\label{maineq2}
\partial_t u + \alpha \partial_x^3 u  +\beta  \mathcal{H} \partial_x^2 u + u \partial_x u = 0
\end{equation}
with either $ \alpha=1 $ and $ |\beta| \ll 1 $ or $ \beta=1 $ and $ 0<\alpha\ll 1 $. Considering traveling wave solutions with speed $ c$, $\varphi_c(x-c \, t )$ of \eqref{maineq2}, we observe that they solve
\begin{equation}\label{Phimaineq2c}
c \varphi - \alpha \varphi'' - \beta \mathcal{H} \varphi' = \frac{1}{2} \varphi ^2.
\end{equation} 
Note that any solution $\varphi_{c, \alpha, \beta} $ to \eqref{Phimaineq2c} will have the index $\{c, \alpha, \beta\}$ referring to different values of $c, \ \alpha,$ and $\beta$, with $\varphi_{\alpha, \beta}$ the particular solution of \eqref{Phimaineq2} corresponding to $c=1$. Following  \cite{Levandosky 1999}, we set 
\begin{equation}\label{I}
I(\varphi) = I_{c,\alpha,\beta}(\varphi)=\frac{1}{2} \int_{\mathbb{R}} \bigg( \alpha(\varphi_x)^2+\beta (D_x^{\frac{1}{2}} \varphi)^2+c \varphi^2  \bigg) dx\; ,
\end{equation}
\begin{equation}\label{K}
K(\varphi) =  \frac{1}{6}\int_{\mathbb{R}} \varphi^3 dx \ ,
\end{equation}
 and consider the constraint-minimization  problem
\begin{equation}\label{minpb}
M^\lambda=M^\lambda_{c,\alpha,\beta} = \inf \{ I_{c,\alpha,\beta}(\varphi); \ \varphi \in H^1_e \ \text{such \ that} \ K(\varphi)= \lambda \}\; , 
\end{equation}
where $ \lambda>0 $ and $H_e^s (\mathbb{R})$ is the closed subspace of $ H^s(\R) $ defined for $s \geq 0$ by 
$$ H_e^s (\mathbb{R})= \{ u \in H^s(\mathbb{R})\, / \ u(-y)= u(y) \ \forall \ y \in \mathbb{R} \}\; .
$$
For our purpose, we will mainly work with traveling wave solutions with speed $ c=1 $ that satisfy
\begin{equation}\label{Phimaineq2}
 \varphi - \alpha \varphi'' - \beta \mathcal{H} \varphi' = \frac{1}{2} \varphi ^2.
\end{equation} 
Note that upon integrating this equation against $ \varphi$, it can be checked that any $ H^1(\mathbb{R})$-solution to \eqref{Phimaineq2} satisfies 
\begin{equation} \label{gssol}
2 \ I_{1,\alpha, \beta} (\varphi) = 3 \  K(\varphi) .
\end{equation}

Before stating our main result, let's define some of the notions to be used throughout this work.
\begin{definition}{(Ground state solution)} \label{def gs}
An even ground state solution of \eqref{Phimaineq} is an $H^1(\R)$-solution to \eqref{Phimaineq} that solves the constraint minimization problem $M^{\lambda}_{c,1,\gamma} $ for some $\lambda >0$.
\end{definition}

\begin{definition}{(Orbital stability)}
Let $v \in H^1(\mathbb{R})$ be a solution to \eqref{Phimaineq}. We say that $v$ is orbitally stable in $H^1(\mathbb{R})$ if for any $\delta >0$, there exists $\eta >0$ such that for all initial data $u_0 \in H^1(\mathbb{R})$ satisfying
$$ \| u_0 - v\|_{H^1} \leq \eta ,$$
the solution $u \in C(\mathbb{R}_+, H^1)$ associated to the initial data $u_0$ in turn satisfies
$$ \sup_{t \in \mathbb{R}_+} \inf_{z \in \mathbb{R}} \|u(t, . +z) - v\|_{H^1} \leq \delta. $$
\end{definition}
We are now in position to state our main results. 
\begin{theorem}\label{Thm1}[Existence, uniqueness, asymptotic behavior and orbital stability of traveling waves]\\
 \eqref{maineq} possesses traveling waves solutions with any speed $ c>0 $ whenever $ \gamma>0 $ and with any speed $ c> \gamma^2/4 $ whenever $\gamma<0$. The profile $ \varphi $ of this traveling wave is an even function belonging to $ H^\infty(\R) $ and satisfying 
  $$
  x^2 |\varphi|+ |x^3| (|\varphi'|+|\varphi''| ) \in L^\infty(\mathbb{R}) \; .
  $$
 Moreover, there exists $ r_2>r_1>0 $ such that :
\begin{itemize}
\item If $ \gamma>0 $, then for $ c\in ]0,r_1  \gamma^2[ \cup ]r_2  \gamma^2,+\infty[ $ \ this profile is the unique even ground state solution to \eqref{maineq} and is orbitally stable. 
\item  If $ \gamma<0 $, then for $ c\in ]r_2  \gamma^2,+\infty [ $ \ this profile is the unique even ground state solution to  \eqref{maineq} and is orbitally stable. 
\end{itemize}
\end{theorem}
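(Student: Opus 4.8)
The plan is to reduce the three parameters $(c,\alpha,\beta)$ to the two essential regimes of the introduction by dilation, obtain existence from the constrained minimization \eqref{minpb}, get regularity and decay from a bootstrap together with the method of \cite{bonali}, and finally derive uniqueness and orbital stability from a perturbation argument anchored at the KdV and Benjamin--Ono solitons, in the spirit of \cite{Kenig Martel Robbiano 2011}. Writing $\varphi(x)=A\,\psi(Bx)$ with $B>0$ and using that $\mathcal H$ commutes with positive dilations, a speed-$c$ solution of \eqref{Phimaineq} is carried to a solution of \eqref{Phimaineq2c}; matching the two free parameters $(A,B)$ shows that the only scale-invariant quantity is $\gamma^2/c$. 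The Fourier symbol of the quadratic form $I_{c,\alpha,\beta}$ in \eqref{I} is $\alpha\xi^2+\beta|\xi|+c$, which is positive for all $\xi$ exactly when $\beta\ge0$, or when $\beta<0$ and $\beta^2<4\alpha c$—precisely the translation of the speed condition ($c>0$ for $\gamma>0$, $c>\gamma^2/4$ for $\gamma<0$) into coercivity of $I$. Large $c/\gamma^2$ then corresponds to $\alpha=1$ with $|\beta|$ small (the KdV regime $\beta\to0$) and small $c/\gamma^2$ with $\gamma>0$ to $\beta=1$ with $\alpha$ small (the BO regime $\alpha\searrow0$); the thresholds $r_1<r_2$ in the statement are exactly the smallness thresholds on $\alpha$ and $|\beta|$ furnished by the perturbation argument below.

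For existence I would solve \eqref{minpb}. Coercivity of $I$ bounds a minimizing sequence in $H^1_e$, and since $K$ is cubic and hence energy-subcritical, the concentration--compactness principle applies: the homogeneity $M^\lambda=\lambda^{2/3}M^1$ gives the strict subadditivity that rules out dichotomy, vanishing is excluded because $K\equiv\lambda>0$ cannot survive in $L^3$, and working in the even subspace fixes the translation, so a minimizer $\varphi$ exists. Its Euler--Lagrange equation produces a Lagrange multiplier $\theta$ with right-hand side $\theta\,\tfrac12\varphi^2$; the rescaling $\varphi\mapsto\theta\varphi$ (equivalently a choice of $\lambda$) normalizes $\theta=1$ and yields a genuine solution of \eqref{Phimaineq2c}, i.e. an even ground state in the sense of Definition \ref{def gs}. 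Regularity is then a standard bootstrap: the operator $c-\alpha\partial_x^2-\beta\mathcal H\partial_x$ has symbol comparable to $1+\xi^2$, so from $\varphi=(c-\alpha\partial_x^2-\beta\mathcal H\partial_x)^{-1}(\tfrac12\varphi^2)$ and the fact that $H^1(\R)$ is an algebra one lifts $\varphi$ from $H^1$ to $H^3$, then iteratively to $H^\infty(\R)$. The algebraic decay $x^2|\varphi|+|x|^3(|\varphi'|+|\varphi''|)\in L^\infty(\R)$ is of Benjamin--Ono type and would be obtained by the method of \cite{bonali}, combining the decay of the kernel of $(c-\alpha\partial_x^2-\beta\mathcal H\partial_x)^{-1}$ with the quadratic structure of the nonlinearity.

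The heart of the theorem is uniqueness and orbital stability. The first step is to prove that, as $\beta\to0$ with $\alpha=1$ (resp. $\alpha\searrow0$ with $\beta=1$), the normalized ground states $\varphi_{\alpha,\beta}$ of \eqref{Phimaineq2} converge strongly, in $H^1$ (resp. $H^{1/2}$), to the speed-$1$ KdV (resp. BO) soliton; this uses uniform a priori bounds, compactness, and uniqueness of the limiting soliton. The second step transfers coercivity: on $H^1_e$ the odd translation mode $\varphi'$ is absent, so the Hessian $\mathcal L_{\alpha,\beta}h=h-\alpha h''-\beta\mathcal H h'-\varphi_{\alpha,\beta}\,h$ converges to the KdV (resp. BO) linearized operator, whose coercivity on the subspace orthogonal to the soliton is classical; a perturbation argument then gives $\langle\mathcal L_{\alpha,\beta}h,h\rangle\gtrsim\|h\|^2$ on the corresponding codimension-one, translation-free subspace for $|\beta|$ (resp. $\alpha>0$) small. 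This coercivity yields uniqueness—by the implicit function theorem the invertible linearization produces a single even branch issuing from the limiting soliton, onto which every even ground state must fall for small parameter—and, combined with the conservation of $E$ and $M$, global well-posedness in $H^1$, and a modulation argument, it drives the standard Cazenave--Lions/Grillakis--Shatah--Strauss scheme to orbital stability in $H^1$.

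The main obstacle is the slow/BO regime $\beta=1$, $\alpha\searrow0$. There the limiting operator is of order one and is coercive only in $H^{1/2}$, whereas the statement is set in $H^1$; the term $\alpha\|h'\|_{L^2}^2$ that restores the $H^1$ norm degenerates as $\alpha\to0$, so the coercivity constant and the stability radius $\eta$ will depend on $\alpha$ (hence on $c$). Keeping the perturbation argument valid for each fixed small $\alpha>0$, and matching the $H^{1/2}$ coercivity of the BO linearization with the $H^1$ conclusion, is the delicate point; the nonlocality and low order of $\mathcal H\partial_x$ also make the convergence $\varphi_{\alpha,1}\to$ BO soliton and the spectral continuity of $\mathcal L_{\alpha,1}$ more demanding than in the KdV case.
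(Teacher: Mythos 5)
Your overall architecture (dilation to the two regimes $\alpha=1,|\beta|\ll1$ and $\beta=1,0<\alpha\ll1$; existence by concentration--compactness for \eqref{minpb} with strict subadditivity from the homogeneity $M^{\lambda}=\lambda^{2/3}M^{1}$; regularity by bootstrap; decay by the kernel method of \cite{bonali}; convergence of the ground states to $Q_{KdV}$ and $Q_{BO}$; stability from coercivity of the Hessian under two almost-orthogonality conditions, with the constant degenerating like $\min\{\eta_0/2-\varepsilon(\alpha),\alpha\}$ in the slow regime) coincides with the paper's. The one place where you genuinely diverge is the uniqueness step, and that is where there is a gap.

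You propose to get uniqueness from the implicit function theorem: the invertible linearization at the limiting soliton produces a single even branch, onto which every even ground state must fall by the convergence result. In the KdV regime this works (it is essentially the route of \cite{ABR}), because $F(\beta,\varphi)=\varphi-\varphi''-\beta\mathcal H\varphi'-\tfrac12\varphi^2$ is $C^1$ from $\mathbb{R}\times H^2_e$ into $L^2_e$ and $D_\varphi F(0,Q_{KdV})=\Upsilon_{1,1,0}$ is an isomorphism of $H^2_e$ onto $L^2_e$ (its kernel $Q_{KdV}'$ being odd). In the BO regime, however, $F(\alpha,\varphi)=\varphi-\alpha\varphi''-\mathcal H\varphi'-\tfrac12\varphi^2$ is a \emph{singular} perturbation in $\alpha$: there is no fixed pair of Banach spaces $X\times Y$ on which $F$ is $C^1$ near $(0,Q_{BO})$ \emph{and} $D_\varphi F(0,Q_{BO})=1-\mathcal H\partial_x-Q_{BO}$ is an isomorphism. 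If $X=H^{3/2}_e$ (the natural domain making $1-\mathcal H\partial_x$ an isomorphism onto $Y=H^{1/2}_e$), the term $\alpha\partial_x^2$ does not map $X$ into $Y$; enlarging $X$ to $H^{5/2}_e$ destroys surjectivity of the $\alpha=0$ linearization onto $Y$. So the branch you invoke is not produced by a routine application of the IFT, and this is precisely the regime ($\gamma>0$, $c$ small) that constitutes the paper's main new contribution. The paper circumvents this by the Kenig--Martel--Robbiano contradiction argument: for two ground states $\varphi^1_\alpha,\varphi^2_\alpha$ the difference $\tilde w$ satisfies the algebraic cancellation $\Gamma_{1,\alpha,1}\tilde w=\tfrac12\tilde w^2$, so the normalized $w=\tilde w/\|\tilde w\|_{H^{1/2}}$ has $\|\Gamma_{1,\alpha,1}w\|_{L^2}=O(\varepsilon(\alpha))$, while almost-orthogonality to $Q_{BO}$ and $Q_{BO}'$ forces $\langle\Gamma_{1,0,1}w,w\rangle\ge\eta_0/2$; the two are incompatible for small $\alpha$. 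Note that this argument only needs the quadratic form, never the invertibility of the operator between fixed spaces, and it uses crucially that $\alpha\ge0$ so that the extra term $\alpha\|\partial_x w\|_{L^2}^2$ has a favorable sign (which is also why no uniqueness is claimed for slow speeds when $\gamma<0$). You correctly flag the slow/BO regime as the delicate point, but the resolution requires replacing the IFT by an argument of this type rather than sharpening the spectral continuity of $\mathcal L_{\alpha,1}$.

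A minor additional remark: for the stability conclusion you do not need a full modulation/Cazenave--Lions scheme; once the coercivity $\langle\Upsilon_{1,\alpha,\beta}u,u\rangle\ge\gamma\|u\|_{H^1}^2$ holds under the two orthogonality conditions $\langle u,\varphi\rangle_{L^2}=\langle u,\varphi'\rangle_{L^2}=0$, the Grillakis--Shatah--Strauss-type criterion (Proposition \ref{orbital}, after \cite{Bouard 2005}) gives orbital stability in $H^1$ directly, exactly as you indicate.
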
 
\begin{remark}
In the case $ \gamma>0 $, it follows from \cite{lopezmaris} that any solution of the constraint minimization problem 
$$
\widetilde{M}^\lambda_{c,\alpha,\beta}= \inf \{ I_{c,\alpha,\beta}(\varphi); \ \varphi \in H^1(\mathbb{R})  \ \text{such that} \ K(\varphi)= \lambda \}\
$$
has to be an even function. Therefore, the  uniqueness result still holds when  replacing $M^\lambda_{c,1,\gamma} $ by $\widetilde{M}^\lambda_{c,1,\gamma}$. In other words, we do not need to minimize in the set of even functions to get the uniqueness.
\end{remark}

\subsection{Organization of the paper}

The organization of this paper will be as follows: in Section \ref{section 2}, we prove the existence of minimizers to \eqref{minpb} by using the concentration-compactness lemma. This leads to the existence of ground state solutions to \eqref{Phimaineq}  for any speed $ c>0$  when $\gamma>0 $ and any speed $ c>\gamma^2/4 $ when $ \gamma<0 $.   We then prove some convergence results of the constructed profiles either to the KdV or to the BO soliton. In Section \ref{section 3}, we prove the uniqueness and the orbital stability results for ground state solutions to \eqref{Phimaineq} for large positive speeds  without a sign condition on $ \gamma $ and for small positive speeds when $ \gamma>0$.  Finally, in Section \ref{section 4}, we prove decay estimates for any $ H^1(\mathbb{R}) $-solution to \eqref{Phimaineq} as well as for  its first derivatives.

\section{Existence}\label{section 2}
This section is devoted to the proof of the following theorem. 
\begin{theorem} {(Existence and limits of $\varphi_{1, \alpha, \beta}$)}
Let $c=1$. There exists a family of even ground state solutions $\{ \varphi_{1, \alpha, \beta}, 0 < \alpha \le 1  , \beta>-2\sqrt{\alpha}  \}$ of \eqref{Phimaineq2} such that
$$ \lim_{\beta \rightarrow 0} \|\varphi_{1,1, \beta} - Q_{KdV} \|_{H^1} = 0 $$
and 
$$ \lim_{\alpha \rightarrow 0^+} \|\varphi_{1,\alpha, 1} - Q_{BO} \|_{H^{1}} = 0 $$
where $ Q_{KdV} $ and $ Q_{BO}$ are respectively the unique even soliton profiles of speed $ 1$ of the KdV and the BO equation.
\end{theorem}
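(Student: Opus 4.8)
The plan is to split the theorem into two parts: establishing existence of minimizers of the constrained problem \eqref{minpb} for each admissible $(\alpha,\beta)$, and then proving the two convergence statements toward $Q_{KdV}$ and $Q_{BO}$. For existence, I would apply the concentration-compactness method of Lions to the minimization problem $M^\lambda_{1,\alpha,\beta}$. The key structural fact is that for $\alpha>0$ and $\beta>-2\sqrt{\alpha}$ the quadratic functional $I_{1,\alpha,\beta}(\varphi)=\tfrac12\int(\alpha\varphi_x^2+\beta(D_x^{1/2}\varphi)^2+\varphi^2)$ is coercive and equivalent to the $H^1$-norm on $H^1_e$: indeed, by Young's inequality $\beta(D_x^{1/2}\varphi)^2=\beta\int|\xi||\hat\varphi|^2\ge-\tfrac{\alpha}{2}\varphi_x^2-\tfrac{\beta^2}{2\alpha}\varphi^2$ pointwise in frequency, and the condition $\beta>-2\sqrt\alpha$ is exactly what keeps $1-\tfrac{\beta^2}{2\alpha}>0$ after optimizing, so $I$ controls $\|\varphi\|_{H^1}^2$ from below. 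First I would verify subadditivity of $\lambda\mapsto M^\lambda$, establish that minimizing sequences are bounded in $H^1_e$, and rule out vanishing and dichotomy via the concentration-compactness dichotomy, using the homogeneity $M^\lambda=\lambda^{2/3}M^1$ coming from the scaling $\varphi\mapsto\mu\varphi$ (which multiplies $I$ by $\mu^2$ and $K$ by $\mu^3$). The strict subadditivity $M^\lambda<M^{\lambda_1}+M^{\lambda-\lambda_1}$ needed to exclude dichotomy follows from this strict concavity-type inequality $M^\lambda=\lambda^{2/3}M^1$. Compactness then yields a minimizer $\psi$, and the Euler-Lagrange equation gives a Lagrange multiplier that can be scaled away by $\varphi_{1,\alpha,\beta}=\theta\psi$ to produce a genuine solution of \eqref{Phimaineq2}; since I minimize over $H^1_e$, the profile is automatically even, and elliptic regularity bootstraps it to $H^\infty$.

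For the convergence statements, the idea is that as $\beta\to0$ with $\alpha=1$ equation \eqref{Phimaineq2} formally becomes the KdV profile equation $\varphi-\varphi''=\tfrac12\varphi^2$ solved by $Q_{KdV}$, and as $\alpha\to0^+$ with $\beta=1$ it becomes the BO profile equation $\varphi-\mathcal H\varphi'=\tfrac12\varphi^2$ solved by $Q_{BO}$. The plan is to argue by a compactness-plus-uniqueness scheme. First I would obtain uniform $H^1$-bounds on the family $\varphi_{1,\alpha,\beta}$: from the identity \eqref{gssol}, $2I_{1,\alpha,\beta}(\varphi)=3K(\varphi)$, combined with the Gagliardo-Nirenberg inequality $\|\varphi\|_{L^3}^3\lesssim\|\varphi\|_{L^2}^{5/2}\|\varphi_x\|_{L^2}^{1/2}$ and the coercivity of $I$, one derives a lower bound on $\|\varphi\|_{H^1}$ away from zero and an upper bound independent of the small parameter, so the $I$-level $M^1$ stays bounded above and below. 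I would pass to a weak limit along a subsequence, and the main work is to upgrade weak to strong convergence: here I would either invoke the concentration-compactness machinery again uniformly in the parameter, or show that the limit is a nonzero $H^1_e$-solution of the limiting ($Q_{KdV}$ or $Q_{BO}$) profile equation at the same $L^2$/$L^3$ level, then use the known uniqueness of the even KdV and BO soliton profiles to identify the limit as exactly $Q_{KdV}$ respectively $Q_{BO}$. A Brezis-Lieb / Rellich argument on the cubic term converts weak convergence and norm convergence into strong $H^1$ convergence.

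The hard part will be controlling the singular limit $\alpha\to0^+$ toward the BO soliton, since there the highest-order term $\alpha\varphi''$ degenerates and the $\dot H^1$-control provided by the coercivity of $I_{1,\alpha,1}$ deteriorates as $\alpha\to0$; the bound $\|\varphi_{1,\alpha,1}\|_{H^1}\lesssim1$ cannot be expected uniformly, so I would work instead in $H^{1/2}$, obtaining uniform $H^{1/2}$-bounds from $\beta\int|D^{1/2}\varphi|^2\le 2I$ and establishing convergence in that weaker topology (which is why the BO statement is phrased in $H^1$ but the natural energy space for BO is $H^{1/2}$). The delicate point is ensuring no mass escapes to high frequencies or to spatial infinity in this degenerate limit; I expect to handle this by a uniform concentration-compactness argument showing the minimizers remain tight and equicontinuous in frequency, possibly supplemented by the a priori decay estimates proved in Section~\ref{section 4}. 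Identifying the limit then reduces to invoking the uniqueness of $Q_{BO}$ as the even ground state of the BO minimization problem, after checking that the limiting variational level matches the BO ground-state level.
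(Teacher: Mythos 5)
Your overall architecture matches the paper's: concentration--compactness for the constrained problem $M^\lambda_{1,\alpha,\beta}$ (coercivity of $I$ for $\beta>-2\sqrt{\alpha}$, the $\lambda^{2/3}$ scaling law giving strict subadditivity to exclude dichotomy, and rescaling of the minimizer by the Lagrange multiplier to get a genuine even ground state), followed by identification of the limits through convergence of the minimization levels and the uniqueness of $Q_{KdV}$ and $Q_{BO}$ as minimizers at their own constraint levels. The KdV limit ($\alpha=1$, $\beta\to0$) goes through essentially as in the paper: testing $M^{K(Q_{KdV})}_{1,1,\beta_n}$ against $Q_{KdV}$ and against $\psi_{1,1,\beta_n}$ shows the levels converge, so the $\psi_{1,1,\beta_n}$ form a minimizing sequence for the \emph{fixed} limit problem and the already-proved compactness of minimizing sequences applies; no concentration--compactness argument uniform in the parameter is needed. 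Your Young-inequality proof of coercivity is fine once optimized in the interpolation parameter (the constants you wrote, $\alpha/2$ and $\beta^2/(2\alpha)$, only reach $|\beta|<\sqrt{2\alpha}$, not the full range $\beta>-2\sqrt{\alpha}$).

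The genuine gap is in the BO limit $\alpha\to0^+$. You assert that a uniform $H^1$ bound ``cannot be expected'' and settle for convergence in $H^{1/2}$; but the theorem claims convergence in $H^1$, and your plan as written does not deliver it. The missing idea is the paper's bootstrap through the Euler--Lagrange equation in Fourier variables: $\bigl(1+|\xi|+\alpha_n|\xi|^2\bigr)\widehat{\psi}=\mu_n\,\widehat{\psi^2}$ with $\mu_n=\tfrac23 I_{1,\alpha_n,1}(\psi)/K(Q_{BO})$ bounded and $\|\widehat{\psi^2}\|_{L^2}=\|\psi\|_{L^4}^2\lesssim\|\psi\|_{H^{1/2}}^2\lesssim1$, which gives $\|(1+|\xi|)\widehat{\psi}\|_{L^2}\lesssim1$ uniformly in $\alpha_n$; one more iteration, using that $H^1(\R)$ is an algebra, yields a uniform $H^2$ bound. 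Interpolating this against the $H^{1/2}$ convergence to $Q_{BO}$ then upgrades the convergence to $H^1$, which is exactly the high-frequency control you flagged as the delicate point but did not supply. A secondary remark: when you identify the weak limit via ``uniqueness of the even soliton profile of the limiting equation,'' the profile equation only determines the limit up to the multiplicative Lagrange constant, so you must use the constraint $K=K(Q_{BO})$ (resp.\ $K(Q_{KdV})$) to fix the normalization, as the paper does.
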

It is worth noticing that the existence part in Theorem \ref{Thm1} follows directly from the above theorem. Indeed, it is direct to check that $ \varphi $ is a solution to \eqref{Phimaineq2} with $ c=\alpha=1 $ if and only if $ \phi(\cdot)= \lambda^2 \varphi (\lambda\cdot) $ is a solution to \eqref{Phimaineq} with $ c=\lambda^2 $ and $ \gamma = |\lambda| \beta $. Therefore, the above theorem ensures the existence of solutions to \eqref{Phimaineq} for any speed $ c>0 $ in the case $ \gamma>0 $ and for any speed $c>\gamma^2/4 $ in the case $ \gamma<0 $.

\subsection{Preliminary Study}

It is worth noticing that by taking $(\alpha,\beta)=(1,0) $ in \eqref{Phimaineq2c}, we recover the equation of solitons of speed $ c$ of the Korteweg-de Vries equation
\begin{equation}\label{KdV soliton}
c  \varphi - \varphi'' = \frac{1}{2} \varphi^2 
\end{equation}
for which there exists for any $ c>0 $, up to translations, a unique solution (cf. \cite{Berestycki Lions 1983}) of the form
\begin{equation}\label{KdV soliton2}
 Q_{c, KdV}(x)= 3 c \ {\rm sech}^2 (\frac{\sqrt{c}}{2} x )  \; . 
 \end{equation}
Since we will mainly work with the soliton $Q_{1, KdV}(x) =3 \, {\rm sech}^2(\frac{x}{2})$ of speed $c= 1$, we denote it simply by $ Q_{KdV} $ to lighten the notations.

In the same way, by  taking  $(\alpha,\beta)=(0,1) $ in \eqref{Phimaineq2c}, we recover the equation of solitons of speed $ c $ of the Benjamin-Ono equation
\begin{equation}\label{BO soliton}
c \varphi - \mathcal{H} \partial_x \varphi= \frac{1}{2} \varphi^2
\end{equation}
for which there exists for any $c>0 $, up to translations, a unique solution (cf.  \cite{Amick Toland 1991}, \cite{Benjamin 1967}) of the form
\begin{equation}\label{BO soliton2}
Q_{c, BO}(x)= \frac{4c }{1 + c^2 x^2} \; .
\end{equation}
Again to lighten the notations, we denote $\displaystyle Q_{1,B0}(x) =\frac{4}{1+x^2} $ simply by  $ Q_{BO} $.

\begin{lemma}\label{coercive}
Let $ c=1 $, $ \alpha\in ]0,1] $  and $ \beta\in \displaystyle ]-2\sqrt{\alpha},+\infty[ $, then $\sqrt{I_{c,\alpha,\beta}} $ is a norm issued from a scalar product on $ H^1(\mathbb{R})$, and is equivalent to the classical norm  of $ H^1(\mathbb{R}) $. \end{lemma}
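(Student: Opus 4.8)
The plan is to pass to the Fourier side and reduce everything to pointwise estimates on the symbol of the quadratic form $I_{1,\alpha,\beta}$. By Plancherel's identity, writing $\|u\|_{H^1}^2=\int_{\mathbb{R}}(1+\xi^2)|\hat u(\xi)|^2\,d\xi$, we have
$$2\,I_{1,\alpha,\beta}(\varphi)=\int_{\mathbb{R}} p(\xi)\,|\hat\varphi(\xi)|^2\,d\xi,\qquad p(\xi):=\alpha\xi^2+\beta|\xi|+1.$$
Polarizing $I_{1,\alpha,\beta}$ produces the symmetric bilinear form $B(\varphi,\psi)=\int_{\mathbb{R}} p(\xi)\,\hat\varphi(\xi)\,\overline{\hat\psi(\xi)}\,d\xi$, which is manifestly bilinear and symmetric, so the whole statement reduces to proving two-sided bounds $m(1+\xi^2)\le p(\xi)\le M(1+\xi^2)$ for some constants $0<m\le M$ depending only on $\alpha,\beta$. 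Once these are in hand, integrating against $|\hat\varphi|^2$ yields $m\|\varphi\|_{H^1}^2\le 2\,I_{1,\alpha,\beta}(\varphi)\le M\|\varphi\|_{H^1}^2$, which simultaneously gives the equivalence with the $H^1$ norm, the positive-definiteness ($I_{1,\alpha,\beta}(\varphi)=0\Rightarrow\varphi=0$), and hence that $B$ is a genuine scalar product whose induced norm is exactly $\sqrt{I_{1,\alpha,\beta}}$.

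The core step is the lower bound, and this is where the hypothesis $\beta>-2\sqrt\alpha$ enters decisively. First I would establish pointwise positivity $p(\xi)>0$ by viewing $p(\xi)=q(|\xi|)$ with $q(s)=\alpha s^2+\beta s+1$ a quadratic in $s\ge 0$. If $\beta\ge 0$, then $q(s)\ge 1$ trivially. If $-2\sqrt\alpha<\beta<0$, the discriminant $\beta^2-4\alpha$ is negative, so $q$ has no real root and, since $\alpha>0$, one has $q(s)>0$ for every real $s$. This is precisely the place where $\beta>-2\sqrt\alpha$ is required: at the endpoint $\beta=-2\sqrt\alpha$ one has $q(s)=(\sqrt\alpha\,s-1)^2$, which vanishes at $s=1/\sqrt\alpha$, so the form degenerates. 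To upgrade pointwise positivity to the uniform bound $p(\xi)\ge m(1+\xi^2)$, I would observe that $\xi\mapsto p(\xi)/(1+\xi^2)$ is continuous, strictly positive, and tends to $\alpha>0$ as $|\xi|\to\infty$, whence it attains a strictly positive infimum $m>0$ on $\mathbb{R}$.

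For the upper bound, the elementary inequality $|\xi|\le\tfrac12(1+\xi^2)$ handles the possibly negative middle term: when $\beta\ge 0$ it gives $p(\xi)\le\alpha\xi^2+\tfrac{\beta}{2}(1+\xi^2)+1\le M(1+\xi^2)$, while when $\beta<0$ the middle term only helps, so $p(\xi)\le\alpha\xi^2+1\le\max(\alpha,1)(1+\xi^2)$. Combining the lower and upper bounds closes the argument. The only genuine difficulty is the sign-indefinite regime $\beta<0$, where positivity of the form rests entirely on the discriminant condition $\beta^2<4\alpha$, i.e.\ $\beta>-2\sqrt\alpha$; everything else is routine Plancherel and a compactness bookkeeping of the ratio $p(\xi)/(1+\xi^2)$.
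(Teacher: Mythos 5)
Your proof is correct and follows essentially the same route as the paper: Plancherel's identity followed by two-sided pointwise bounds on the symbol $p(\xi)=1+\beta|\xi|+\alpha\xi^2$ against $1+\xi^2$. The only (minor) difference is in the lower bound for $-2\sqrt{\alpha}<\beta<0$: the paper extracts an explicit constant by writing $\beta=-2\sqrt{\alpha-\varepsilon}\,(1-\varepsilon)$ and completing a square, whereas you deduce a (non-explicit) positive infimum of $p(\xi)/(1+\xi^2)$ from the negative discriminant $\beta^2-4\alpha<0$ together with continuity and the limit $\alpha>0$ at infinity; both arguments are valid.
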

\begin{proof}
First, it is clear that $I(u) $ is associated to a bilinear form on $ H^1(\mathbb{R}) $. Let $u \in H^1(\mathbb{R})$. 
Taking the Fourier Transform (Plancherel Identity), it holds 
$$
2 I_{1,\alpha,\beta}(u) = 
\bigg [1+\beta |\xi| +\alpha |\xi|^2 \bigg ] |\hat{u}|^2 \ .$$
In the case $ \beta \ge 0 $, we trivially have 
$$
1+\beta |\xi| +\alpha |\xi|^2 \ge 1+ \alpha |\xi|^2\ge \frac{1}{2} (1+ |\xi|+\alpha |\xi|^2) \ge \frac{1}{2} \min(\alpha,1) (1+ |\xi|+ |\xi|^2) .
$$
Now for $ -2 \sqrt{\alpha}<\beta<0 $, we may rewrite $\beta $ as $  \beta = 
-2 \sqrt{\alpha-\varepsilon} (1-\varepsilon) $ with $ 0<\varepsilon<\alpha  $. 
Noticing that 
$$
\min_{\xi\in \R} \Bigl[ (1-\varepsilon) -2 \sqrt{\alpha-\varepsilon} (1-\varepsilon)  |\xi|+ (\alpha-\varepsilon) |\xi|^2\Bigr]  =0 ,
$$
this leads to 
$$
1+\beta |\xi| +\alpha |\xi|^2 \ge \varepsilon (1+|\xi|^2) \ge \frac{\varepsilon}{2} (1+ |\xi|+ |\xi|^2) .
$$
Hence, it holds 
\begin{equation}\label{coerci}
I_{1,\alpha,\beta} (u) \geq  C_{\alpha,\beta}  \|u\|^2_{H^1}  \; .
\end{equation}
for some $ C_{\alpha,\beta} >0 $.

So, $I$ is coercive in $H^1(\mathbb{R})$. Besides, for both cases of $\alpha$ and $\beta$, one can see that 
$$ I(u) \leq \frac{3}{4} \max(1,|\beta|)  \|u\|^2_{H^1} \ .$$
Eventually, $\sqrt{I_{1,\alpha,\beta}} $ is equivalent to the $H^1$-norm. 
\end{proof}

\begin{lemma}\label{subadd}
$M^\lambda >0$ for any $ \lambda>0 $. Moreover, for any $(\lambda,\lambda_0) \in \mathbb{R}^*_+\times \mathbb{R}^*_+ $ it holds 
\begin{equation} \label{dila}
M^{\lambda} = \bigg( \frac{\lambda}{\lambda_0}\bigg )^\frac{2}{3} M^{\lambda_0} \quad \text{and} \quad M^{\lambda_0}< M^{\lambda} + M^{\lambda_0 - \lambda}    \; .
\end{equation}
\end{lemma}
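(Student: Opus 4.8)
The plan is to establish the three assertions in turn, all of which rest on the coercivity of $I$ proved in Lemma \ref{coercive} together with the elementary homogeneities of $I$ and $K$ under scalar multiplication. Throughout I read the subadditivity inequality as concerning $0<\lambda<\lambda_0$, which is what is needed for $M^{\lambda_0-\lambda}$ to be defined through the minimization problem \eqref{minpb}.

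For the positivity $M^\lambda>0$, I would first note that the constraint set $\{\varphi\in H^1_e:\,K(\varphi)=\lambda\}$ is nonempty for every $\lambda>0$: pick any even $\psi$ with $\int_{\mathbb{R}}\psi^3>0$ and rescale it by a suitable positive constant. On this set I combine the coercivity bound \eqref{coerci}, namely $I(\varphi)\ge C_{\alpha,\beta}\|\varphi\|_{H^1}^2$, with the one-dimensional Sobolev embedding $H^1(\mathbb{R})\hookrightarrow L^3(\mathbb{R})$. Indeed, since $6\lambda=\int_{\mathbb{R}}\varphi^3\le\|\varphi\|_{L^3}^3\le C\|\varphi\|_{H^1}^3\le C\,C_{\alpha,\beta}^{-3/2}\,I(\varphi)^{3/2}$, I obtain a lower bound of the form $I(\varphi)\ge (6\lambda)^{2/3}\kappa$ with $\kappa=\kappa(\alpha,\beta,C)>0$ independent of $\varphi$ on the constraint set. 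Passing to the infimum then yields $M^\lambda\ge (6\lambda)^{2/3}\kappa>0$.

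For the exact scaling law I exploit that $I$ is $2$-homogeneous and $K$ is $3$-homogeneous, i.e. $I(\mu\varphi)=\mu^2 I(\varphi)$ and $K(\mu\varphi)=\mu^3 K(\varphi)$ for $\mu>0$. Hence the map $\varphi\mapsto\mu\varphi$ with $\mu=(\lambda/\lambda_0)^{1/3}$ is a bijection from $\{K=\lambda_0\}$ onto $\{K=\lambda\}$, and along it $I$ is multiplied by $\mu^2=(\lambda/\lambda_0)^{2/3}$. Taking the infimum over $\{K=\lambda_0\}$ and transporting it through this bijection gives $M^\lambda=(\lambda/\lambda_0)^{2/3}M^{\lambda_0}$.

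The strict subadditivity then follows purely algebraically from the scaling law, which is the pleasant feature of minimizing the quadratic part under the cubic constraint: no genuine compactness argument is required. Writing $M^t=M^{\lambda_0}(t/\lambda_0)^{2/3}$ and setting $a=\lambda$, $b=\lambda_0-\lambda$, the inequality reduces to the strict subadditivity of $t\mapsto t^{2/3}$, namely $(a+b)^{2/3}<a^{2/3}+b^{2/3}$ for $a,b>0$. This is immediate from $0<2/3<1$: with $s=a/(a+b)\in(0,1)$ one has $s^{2/3}>s$ and $(1-s)^{2/3}>1-s$, so $s^{2/3}+(1-s)^{2/3}>1$, and multiplying by $(a+b)^{2/3}$ proves it. Multiplying through by $M^{\lambda_0}>0$ from the first step yields $M^{\lambda_0}<M^\lambda+M^{\lambda_0-\lambda}$. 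I do not anticipate a serious obstacle here; the only points needing a little care are the nonemptiness of the constraint set and the uniformity of the positivity estimate over it, both handled by the Sobolev embedding together with \eqref{coerci}. The essential observation is simply that the exact homogeneity of the functionals converts strict subadditivity into the elementary concavity of $t^{2/3}$.
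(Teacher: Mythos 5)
Your proof is correct and follows essentially the same route as the paper: positivity via the coercivity bound \eqref{coerci} combined with the Sobolev embedding $H^1(\mathbb{R})\hookrightarrow L^3(\mathbb{R})$, the scaling law via the $2$- and $3$-homogeneities of $I$ and $K$, and strict subadditivity from the strict concavity of $t\mapsto t^{2/3}$. The only cosmetic differences are that you give a direct quantitative lower bound $M^\lambda\ge(6\lambda)^{2/3}\kappa$ where the paper argues by contradiction along a minimizing sequence, and that you spell out the nonemptiness of the constraint set and the elementary proof of $(a+b)^{2/3}<a^{2/3}+b^{2/3}$.
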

\begin{proof}
Let $\lambda >0$. Suppose to the contrary that there exists $(u_n)$, a minimizing sequence of $M^\lambda$, such that $I(u_n) \longrightarrow_{n\rightarrow +\infty} 0$. But $I$ is equivalent to $H^1$-norm, then $\|u_n\|_{H^1} \longrightarrow 0$ too. Yet, the constraint $K$ is controlled by the $H^1$-norm as $H^1(\mathbb{R}) \hookrightarrow L^3(\mathbb{R})$. This leads to a contradiction with the fact that the constraint $K(u_n) = \lambda >0$, and thus $M^\lambda >0$. \\
Now, by a dilation argument, one can check that $(u_n)$ is a minimizing sequence of $M^{\lambda}$ if and only if $(v_n)$, defined by $\displaystyle v_n =\bigg (\frac{\lambda_0}{\lambda} \bigg)^\frac{1}{3} u_n$, is a minimizing sequence of $M^{\lambda_0}$.\\
For the second statement in \eqref{dila}, let $\lambda_0 > \lambda >0$. We recall that $f: x \mapsto x^{2/3}$ is a strictly concave function, thus is also strictly subadditive, which principally means that $f(x+y) < f(x) + f(y) $ for $x, \ y >0$. 
This finishes the proof of \eqref{dila}.
\end{proof}
\subsection{Existence of minimizers}

\begin{proposition}\label{minimizer}
Let $ \lambda>0 $. For any $\alpha\in  [0,1]  $ and $ \beta>-2 \sqrt{\alpha} $, there exists a solution to the constrained minimizing problem $M^\lambda_{1,\alpha,\beta} $.

More precisely, set $ \theta=1 $ for  $\alpha>0 $ and $ \theta=1/2 $ for $ \alpha=0$.
Given a minimizing sequence $(\psi_n)$ of $M^\delta_{1,\alpha,\beta} $, there exists a subsequence $(\psi_{n_k})$ converging to  $\psi$ in $H^\theta_e(\mathbb{R})$ that is a solution to the constrained minimization problem $M^\delta_{1,\alpha,\beta} $. 

Finally, $ Q_{KdV} $ and $Q_{BO} $ defined in \eqref{KdV soliton} and \eqref{BO soliton} are respectively the unique solutions to $M_{1,1,0}^{K(Q_{KdV})} $ and $M_{1,0,1}^{K(Q_{BO})} $ .
\end{proposition}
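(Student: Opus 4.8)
The plan is to establish existence and the subsequential convergence of minimizing sequences by the concentration--compactness method, and then to deduce the identification of $Q_{KdV}$ and $Q_{BO}$ from the Euler--Lagrange equation together with the known uniqueness of the KdV and BO profiles. Fix $\delta>0$ and let $(\psi_n)\subset H^\theta_e(\R)$ be a minimizing sequence for $M^\delta_{1,\alpha,\beta}$. By Lemma~\ref{coercive} when $\alpha>0$ (and by the analogous equivalence of $\sqrt{I_{1,0,\beta}}$ with the $H^{1/2}$--norm when $\alpha=0$), the functional $I$ controls the $H^\theta$--norm, so $(\psi_n)$ is bounded in $H^\theta_e(\R)$; moreover Gagliardo--Nirenberg together with $K(\psi_n)=\delta>0$ keeps $\|\psi_n\|_{L^2}$ bounded away from $0$. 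I would therefore apply the concentration--compactness lemma to the $L^2$--densities $\rho_n=\psi_n^2$, whose masses $\int_{\R}\rho_n=\|\psi_n\|_{L^2}^2$ are, after extraction, bounded and bounded below, converging to some $s>0$. This yields, up to a subsequence, one of the three alternatives. Vanishing is excluded at once, since it would force $\psi_n\to 0$ in $L^3(\R)$ by the standard vanishing lemma and the embedding $H^\theta(\R)\hookrightarrow L^3(\R)$, contradicting $K(\psi_n)=\delta>0$.

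To exclude dichotomy I would split $\psi_n\simeq g_n+h_n$ along the two concentrating blocks, with $\mathrm{dist}(\mathrm{supp}\,g_n,\mathrm{supp}\,h_n)\to+\infty$ and $K(g_n)\to\delta_1$, $K(h_n)\to\delta-\delta_1$. The cubic functional $K$ splits additively up to $o(1)$ since it is local, and one obtains $I(\psi_n)\ge I(g_n)+I(h_n)+o(1)$, whence in the limit $M^\delta\ge M^{\delta_1}+M^{\delta-\delta_1}$, in direct contradiction with the strict subadditivity $M^\delta<M^{\delta_1}+M^{\delta-\delta_1}$ recorded in Lemma~\ref{subadd}. \emph{This is the step I expect to be the main obstacle}: because of the nonlocal term $(D_x^{1/2}\psi_n)^2$ in $I$ (see \eqref{I}), the quadratic form does not decouple under spatial truncation as cleanly as a purely local Dirichlet energy, so one must show that the cross contribution of $D_x^{1/2}$ between the two far-apart pieces $g_n$ and $h_n$ tends to $0$. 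I would control this through a truncation/commutator estimate exploiting the decay away from the diagonal of the kernel associated with $D_x^{1/2}$.

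With vanishing and dichotomy ruled out, compactness holds. Working in the even subspace $H^\theta_e$ pins the concentration at the origin: any mass drifting to a center $y_n$ is mirrored at $-y_n$, and after excluding dichotomy this forces $(y_n)$ to be bounded with an even limit, so no translation is needed. Tightness together with the local compactness $H^\theta_e(\R)\hookrightarrow L^3_{\mathrm{loc}}(\R)$ then gives $\psi_n\to\psi$ strongly in $L^3(\R)$, hence $K(\psi)=\delta$, while weak lower semicontinuity of $I$ yields $I(\psi)\le\liminf I(\psi_n)=M^\delta$. As $\psi$ is admissible we get $I(\psi)=M^\delta$, so $\psi$ is a minimizer; and since $\sqrt{I}$ is an equivalent norm with $I(\psi_n)\to I(\psi)$, the convergence is in fact strong in $H^\theta_e(\R)$.

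It remains to identify the minimizers in the two limiting cases. Any minimizer $\psi$ of $M^{K(Q_{KdV})}_{1,1,0}$ satisfies the Euler--Lagrange equation $I'(\psi)=\nu K'(\psi)$, that is $\psi-\psi''=\tfrac{\nu}{2}\psi^2$; pairing with $\psi$ and using $I(\psi)>0$, $K(\psi)>0$ shows $\nu>0$, so $\phi:=\nu\psi$ solves the speed-$1$ KdV profile equation \eqref{KdV soliton}. By the uniqueness of its even $H^1$ solution recalled from \cite{Berestycki Lions 1983} we get $\phi=Q_{KdV}$, and then the constraint $K(\psi)=\nu^{-3}K(Q_{KdV})=K(Q_{KdV})$ forces $\nu=1$, i.e. $\psi=Q_{KdV}$. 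The identical argument, now with the Fourier multiplier $-\mathcal{H}\partial_x=\lvert D_x\rvert$ in place of $-\partial_x^2$ and with the uniqueness result of \cite{Amick Toland 1991}, shows that the minimizer of $M^{K(Q_{BO})}_{1,0,1}$ in $H^{1/2}_e$ is $Q_{BO}$.
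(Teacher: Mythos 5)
Your overall strategy coincides with the paper's: concentration--compactness on a minimizing sequence, exclusion of vanishing by Sobolev embedding, exclusion of dichotomy by a cutoff splitting plus a commutator estimate for the nonlocal term $D_x^{1/2}$ (this is exactly the paper's Lemma~\ref{commutator}, borrowed from \cite{Albert 1999}), strict subadditivity from Lemma~\ref{subadd}, evenness to pin the concentration point at the origin, and identification of $Q_{KdV}$, $Q_{BO}$ via the Euler--Lagrange equation, the scaling of the Lagrange multiplier and the uniqueness results of \cite{Berestycki Lions 1983} and \cite{Amick Toland 1991}. The only structural variant is that you run the concentration--compactness trichotomy on the $L^2$-density $\rho_n=\psi_n^2$, whereas the paper uses the full $H^\theta$-density $|\psi_n|^2+|\partial_x\psi_n|^2$ (resp.\ $|\psi_n|^2+|D_x^{1/2}\psi_n|^2$); both work, since boundedness in $H^\theta$ plus $L^2$-tightness already yields strong $L^3$ convergence, and weak lower semicontinuity of $I$ does the rest.

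There is, however, one genuine gap in your exclusion of dichotomy. You let $K(g_n)\to\delta_1$ and $K(h_n)\to\delta-\delta_1$ and declare a ``direct contradiction with the strict subadditivity $M^\delta<M^{\delta_1}+M^{\delta-\delta_1}$''. But $K$ is the sign-indefinite cubic $\frac16\int\psi^3$ and the $\psi_n$ have no sign along a minimizing sequence, so nothing forces $\delta_1\in(0,\delta)$: one may have $\delta_1\le 0$ or $\delta_1\ge\delta$, and then $M^{\delta_1}$ is either undefined (the problem \eqref{minpb} requires $\lambda>0$) or the subadditivity inequality of Lemma~\ref{subadd} simply does not apply. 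These cases need separate arguments, which the paper supplies: for $\delta_1=0$ one uses that the first block still carries a fixed amount $l>0$ of $H^\theta$-mass, so $I(\psi_n)\ge l+M^{K(h_n)}+o(1)$ gives $M^\delta\ge l+M^\delta$; for $\delta_1>\delta$ (and symmetrically $\delta_1<0$, which forces $\delta-\delta_1>\delta$) one drops the other block and uses the scaling $M^{\delta_1}=(\delta_1/\delta)^{2/3}M^\delta>M^\delta$. Without this case analysis the dichotomy alternative is not actually ruled out. A second, minor omission: before invoking the uniqueness of the even KdV/BO profiles you should record that a minimizer, a priori only in $H^\theta$, is smooth (the paper bootstraps the Euler--Lagrange equation to $H^\infty(\R)$ using the coercivity of the symbol), so that it lies in the class where those uniqueness theorems apply.
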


\begin{proof}
We shall treat first the case $\alpha \neq 0$. Let us solve the constraint minimization problem $M^\delta$. We start by taking $(\psi_n)_{n \geq 1}$ a minimizing sequence of the problem $M^\delta$, that is, 
\begin{equation} \label{psin}
I(\psi_n) \longrightarrow_{n \rightarrow +\infty} M^\delta \quad \text{and} \quad K(\psi_n) = \delta>0.
\end{equation}
We claim that there exists a subsequence $(\psi_k)_{k \geq 1}$, and $\psi \in H^1_e$ such that $\psi_k \longrightarrow \psi $ in $H^1_e$ and $\psi$ is a minimizer of $M^\delta$. As $I$ is equivalent to $H^1$-norm according to Lemma \ref{coercive}, then by \eqref{psin} too, $(\psi_n)$ is bounded in $H^1(\mathbb{R})$ which is a Hilbert space. Thus, there exists a subsequence $(\psi_k)$ and a function $\psi$ such that
\begin{equation}\label{weak conv}
\psi_k \rightharpoonup \psi \quad \text{in} \ H^1(\mathbb{R})
\end{equation}
and 
\begin{equation}\label{lim inf}
\|\psi\|^2_{H_1} \leq \liminf_{k \rightarrow + \infty} \|\psi_k\|^2_{H^1}.
\end{equation}
Besides, thanks to the compact embedding $H^1\hookrightarrow L^3_{loc}(\mathbb{R})$, we get
\begin{equation}\label{L3loc conv}
\psi_k \longrightarrow \psi \quad \text{in} \ L^3_{loc}(\mathbb{R})
\end{equation}
and
\begin{equation}\label{ae conv}
\psi_k \longrightarrow \psi \quad \text{a.e \ in} \ \mathbb{R}.
\end{equation}
Now, define the sequence of positive and even functions
$$\rho_n = |\psi_n|^2 + |\partial_x \psi_n|^2; \quad \quad n \in \mathbb{N}^*.$$
As $(\rho_n)$ is bounded in $L^1(\mathbb{R})$, we can extract a subsequence $(\rho_k)$ assuming that 
$$ \lim_{k \rightarrow +\infty} \int_{\mathbb{R}} \rho_k = L <+\infty.$$
By normalizing, we may also assume that 
$$\int_{\mathbb{R}} \rho_k = L \quad \forall \ k \in \mathbb{N}^*. $$
Based on the above analysis, Lemma \ref{coercive} and Lemma \ref{subadd}, we are now eligible to employ the Concentration Compactness Lemma developed by Lions \cite{Lions 1984-1}, \cite{Lions 1984-2}, and we end up with three possibilities:
\begin{itemize}
\item[(P1)] Concentration: There exists $(y_k) \subset \mathbb{R}$ such that for all $\varepsilon >0, \ \exists \ R_\varepsilon >0$; $\forall \ k \in \mathbb{N}^*$ we have
$$ \int_{|x-y_k| \leq R_\varepsilon} \rho_k \ dx \ \geq \ \int_{\mathbb{R}} \rho_k \ dx \quad - \varepsilon , $$
\item[(P2)] Vanishing: For every $R>0$, we have
$$ \lim_{k \rightarrow +\infty} \ \sup_{y \in \mathbb{R}} \int_{|x-y| \leq R} \rho_k \ dx =0,$$
or
\item[(P3)] Dichotomy: There exists $l \in (0,L)$ such that for all $\varepsilon >0, \ \exists \ R>0$, $(R_k)_{k}$ with $R_k \rightarrow +\infty$, $(y_k)_{k \geq 1}$, and $k_0$ such that $\forall \ k \geq k_0$, we have
$$ \bigg| \int_{|x-y_k|\leq R} \rho_k \ dx \quad - l \bigg| < \varepsilon^2, \quad \text{and} \quad \bigg| \int_{R < |x-y_k| < R_k} \rho_k \ dx \bigg| < \varepsilon^2.$$
\end{itemize}
Our plan is to rule out possibilities (P2) and (P3) and so (P1) holds, thus proving that the limit of the minimizing sequence is itself a minimizer of $M^\delta$.\\
To begin with, suppose (P2) is true and take $R=1$. By Sobolev embedding, we have 
\begin{eqnarray}\nonumber
\displaystyle \int_{|x-y| \leq 1} |\psi_k|^3 \ dx & \lesssim & \bigg( \int_{|x-y| \leq 1} |\psi_k|^2 + |\partial_x \psi_k|^2 \ dx \bigg)^\frac{3}{2} \nonumber \\
\displaystyle &=& \bigg ( \int_{|x-y| \leq 1}\rho_k \bigg)^\frac{3}{2}. \nonumber
\end{eqnarray} 
Yet, one can assert from assumption (P2) that for any  $\varepsilon >0$ there exists  $k(\varepsilon)>0$ such that for all $k \geq k(\varepsilon)$ and all $y\in \mathbb{R} $, it holds
$$ \int_{|x-y| \leq 1} \rho_k \ dx \leq \varepsilon.$$
So, for every $y \in \mathbb{R}$,
$$ \int_{|x-y| \leq 1} |\psi_k|^3 \ dx \lesssim \varepsilon^\frac{1}{2} \bigg ( \int_{|x-y| \leq 1}\rho_k \ dx \bigg). $$
Re-summing over intervals centered at even integers, then taking $\varepsilon$ small enough, we get a contradiction to the fact that $K(\psi_k) =\lambda >0$ for all $k \in \mathbb{N}^*$.

Now, suppose (P3) is true. We start by  fixing $ \varepsilon>0 $. The idea is to  decompose $\psi_k$ into two parts in order to construct two minimizing sequences  that  are going infinitely away from each other. This will lead to a contradiction due to the sub-additivity condition following from \eqref{dila}.

Define two smooth cutoff functions $\xi_1$ and $\xi_2$ with supports on $\{ |x| \leq \frac{3}{4}\}$ and $\{ |x| \geq \frac{1}{4} \}$ respectively in such a way that $\xi_1(x)=1$ on $\{ |x| \leq \frac{1}{4} \}$, $\xi_2(x)= 1$ on $\{|x| \geq \frac{3}{4}\}$, and $\xi_1^2 + \xi_2^2 =1$ on $\mathbb{R}$. At this point, construct two sequences 
$$\psi_{k,1}= \xi_1 \bigg ( \frac{|\cdot-y_k|}{R_k} \bigg) \psi_k
\quad \text{ and } \quad \psi_{k,2}= \xi_2 \bigg ( \frac{|\cdot-y_k|}{R_k} \bigg) \psi_k \ ,\ \forall k\geq 1 $$
where $(R_k)_k$ and $y_k$ are consistent with the statement of Dichotomy (P3). The boundedness of  $ (\psi_k) $  in $ H^1(\mathbb{R}) $ ensures that $ (\psi_{k,1})$ and $ (\psi_{k,2}) $ are also bounded in $ H^1(\mathbb{R}) $.
 We claim that these two subsequences satisfy the following property for all $ k \geq k_0=k_0(\varepsilon)$,
\begin{equation}\label{sum K}
K(\psi_k) = K(\psi_{k,1}) + K(\psi_{k,2}) + O(\varepsilon).
\end{equation}
Indeed, regarding \eqref{sum K}, we have
\begin{eqnarray}\nonumber
\displaystyle \int (\psi_{k,1})^3 + \int (\psi_{k,2})^3 &=& \int \xi_{1,R_k}^2 \ \psi_k^3 + \int \xi_{2,R_k}^2 \ \psi_k^3 \nonumber \\
\displaystyle &+& \int (\xi_{1,R_k}^3 - \xi^2_{1,R_k}) \ \psi_k^3 +  \int (\xi_{2,R_k}^3 - \xi^2_{2,R_k}) \ \psi_k^3 \ . \label{sumkform} 
\end{eqnarray} 
For $k$ large enough, we have 
$$\xi_{1,R_k} = 1 \ \text{on} \ [- \frac{R_k}{4}, \frac{R_k}{4} ] \quad \text{and} \quad \xi_{1,R_k} = 0 \ \text{on} \ \mathbb{R} \setminus [-\frac{3 \ R_k}{4}, \frac{3 \ R_k}{4}] $$
so supp $(\xi_{1,R_k}^3 - \xi^2_{1,R_k}) \subset D=[-\frac{3 R_k}{4}, -\frac{R_k}{4}] \cup [\frac{R_k}{4}, \frac{3 R_k}{4}]$. Hence, by Cauchy-Schwarz inequality, the Sobolev embedding $H^\frac{1}{2} \hookrightarrow L^4(\mathbb{R})$, and the Dichotomy assumption, we get 
\begin{eqnarray}\nonumber
\displaystyle  \int_{\mathbb{R}} (\xi_{1,R_k}^3 - \xi^2_{1,R_k}) \ \psi_k^3 &\leq & \int_{D} |\psi_k^3| = \int_{D} |\psi_k| \ |\psi_k|^2 \nonumber \\
\displaystyle &\lesssim & \bigg( \int_{D} |\psi_k|^2 \bigg)^\frac{1}{2} \ \bigg( \int_{D} |\psi_k|^4 \bigg)^\frac{1}{2} \nonumber \\
\displaystyle & \lesssim & \bigg( \int_{\frac{R_k}{4} <|x-y_k| < \frac{3 R_k}{4}} \rho_k \bigg)^\frac{1}{2} \|\psi_k\|_{L^4}^2 \nonumber \\
\displaystyle & \lesssim & \varepsilon \ \|\psi_k\|_{H^\frac{1}{2}}^2 \ .\nonumber
\end{eqnarray}
Repeating this same work for the term $\displaystyle \int_{\mathbb{R}} (\xi_{2,R_k}^3 - \xi^2_{2,R_k}) \ \psi_k^3$, then substituting in \eqref{sumkform}, we obtain \eqref{sum K}.
\\
We also claim that the two subsequences $(\psi_{k,1})$ and $(\psi_{k,2})$ satisfy 
\begin{equation}\label{sum I}
I(\psi_k) = I(\psi_{k,1}) + I(\psi_{k,2}) + O(\varepsilon) 
\end{equation} 
for all $ k \geq k_0$ where we  choose $ k_0=k_0(\varepsilon)$ such that $ 1/R_{k_0} <\varepsilon $. To prove the latter, we follow \cite{Albert 1999}. It holds 
\begin{eqnarray}\nonumber
\displaystyle I(\psi_{k,1}) + I(\psi_{k,2}) &=& \frac{c}{2} \bigg[ \int (\psi_{k,1})^2 + (\psi_{k,2})^2 \bigg] + \frac{\alpha}{2} \bigg[ \int (\psi_{k,1}')^2 + (\psi_{k,2}')^2 \bigg]\nonumber \\
\displaystyle & +& \frac{\beta}{2} \bigg[\int (D_x^{\frac{1}{2}} \psi_{k,1})^2 + (D_x^{\frac{1}{2}} \psi_{k,2})^2 \bigg] \nonumber \\
\displaystyle &:=& L_{c,\alpha} + L_{\beta} \nonumber
\end{eqnarray}
where, for more clarity, we treat the first and second lines separately. Let $\xi_{i,R}(\cdot)$ denote $\xi_i(\frac{\cdot}{R})$, we can see that
\begin{eqnarray}\nonumber
\displaystyle L_{c,\alpha} &=& \frac{c}{2} \bigg[ \int (\xi_{1,R_k} \ \psi_k )^2 + (\xi_{2,R_k} \ \psi_k )^2 \bigg] \nonumber \\
\displaystyle &+& \frac{\alpha}{2} \bigg[ \int (\xi_{1,R_k}' \ \psi_k)^2 + 2 \int \xi_{1,R_k} \ \xi'_{1,R_k} \ \psi'_k \ \psi_k \ +\int (\xi_{1,R_k} \ \psi'_k)^2 \bigg] \nonumber \\
\displaystyle &+& \frac{\alpha}{2} \bigg[ \int (\xi_{2,R_k}' \ \psi_k)^2 + 2 \int \xi_{2,R_k} \ \xi'_{2,R_k} \ \psi'_k \ \psi_k \ +\int (\xi_{2,R_k} \ \psi'_k)^2 \bigg] \nonumber .
\end{eqnarray}
Using the facts that 
\begin{equation} \label{props}
\xi_{1,R_k}^2 + \xi_{2,R_k}^2 =1 \quad \text{and} \quad \displaystyle \|\xi_{i,R_k}'\|_{L^\infty} \leq \frac{1}{R_k} \ \|\xi_i\|_{L^\infty} \lesssim \frac{1}{R_k}\lesssim \varepsilon,
\end{equation}
we get
\begin{equation}\label{L1}
L_{c,\alpha} = \frac{c}{2} \int (\psi_k )^2 + \frac{\alpha}{2} \int (\psi_k')^2 + O(\varepsilon). 
\end{equation}
Now to control the contribution of  $L_{\beta}$, we recall that 
$$ \int (D_x^\frac{1}{2} f)^2 = \int f' \mathcal{H} f \quad \forall f\in H^1(\mathbb{R}) $$ so that $L_{\beta}$ may be rewritten as
$$ L_{\beta} = \frac{\beta}{2} \bigg[ \int (\psi_{k,1})' \mathcal{H} \psi_{k,1} + \int (\psi_{k,2})' \mathcal{H} \psi_{k,2} \bigg] .$$
We need the following commutator  Lemma; check \cite{Albert 1999} and \cite{Albert Bona Saut 1961}.
\begin{lemma}\label{commutator}
There exists a constant $A >0$ such that if $\theta$ is any continuously differentiable function with $ \theta' \in L^\infty$ then
$$ \|[\mathcal{H} \partial_x, \theta] \ f\|_{L^2} \leq A \ \|\theta'\|_{L^\infty} \ \|f\|_{L^2}, \quad \forall f\in L^2(\mathbb{R}) \;,  $$
where $[\mathcal{H} \partial_x,\theta] f = \mathcal{H} \partial_x(\theta f) - \theta \ \mathcal{H} \partial_x(f)$. 
\end{lemma}
Looking at the first component of $L_{\beta}$ (since the other component can be treated identically), we can see that
\begin{eqnarray}\nonumber
\displaystyle \int (\xi_{1,R_k} \ \psi_k) \ [\mathcal{H} \partial_x, \xi_{1, R_k}] \psi_k &=& \int \xi_{1,R_k} \ \psi_k \ \mathcal{H} \partial_x (\xi_{1, R_k} \ \psi_k) - \int \xi^2_{1,R_k} \ \psi_k \ \mathcal{H} \partial_x \psi_k \nonumber \\
\displaystyle &=& - \int \mathcal{H}(\xi_{1,R_k} \ \psi_k) \ (\xi_{1,R_k} \ \psi_k)' + \int (\xi_{1,R_k}^2 \ \psi_k)' \ \mathcal{H} \psi_k \nonumber \\
\displaystyle &=&- \int (\xi_{1,R_k} \ \psi_k)' \mathcal{H}(\xi_{1,R_k} \ \psi_k) + 2 \int \xi_{1,R_k} \ \xi_{1,R_k}'  \ \psi_k \ \mathcal{H} \psi_k \nonumber \\
\displaystyle &+& \int \xi_{1,R_k}^2 \ \psi_k' \ \mathcal{H} \psi_k \ .\nonumber
\end{eqnarray}
Hence,
\begin{eqnarray}
L_{\beta}= \frac{\beta}{2} \bigg[ &-&\int (\xi_{1,R_k} \ \psi_k) \ [\mathcal{H} \partial_x, \xi_{1, R_k}] \psi_k - \int (\xi_{2,R_k} \ \psi_k) \ [\mathcal{H} \partial_x, \xi_{2, R_k}] \psi_k \nonumber \\
\displaystyle &+& \quad 2 \int \xi'_{1,R_k} \ \xi_{1,R_k} \ \psi_k \mathcal{H}(\psi_k) + 2 \int \xi'_{2,R_k} \ \xi_{2,R_k} \ \psi_k \mathcal{H}(\psi_k) \nonumber \\
\displaystyle &+& \quad \int \xi_{1,R_k}^2 \ \psi'_k \mathcal{H} (\psi_k) + \int \xi_{2,R_k}^2 \ \psi'_k \mathcal{H} (\psi_k) \bigg] .\nonumber
\end{eqnarray}
Using  \eqref{props} and H\"older's inequality, we can check that for $i=1, 2$,
\begin{eqnarray}\nonumber
\Bigl| \int \xi'_{i,R_k} \ \xi_{i,R_k} \ \psi_k \mathcal{H}(\psi_k)\Bigr| &\leq & \frac{1}{R_k} \int_D | \psi_k \ \mathcal{H} \psi_k |
 \leq \frac{1}{R_k} \ \|\psi_k\|_{L^2}^2 \lesssim \frac{1}{R_k} \ . \label{LB2}
\end{eqnarray}
In the same way, using also Lemma \ref{commutator}, we can check that for $i=1, 2$,
\begin{eqnarray}\nonumber
\displaystyle\Bigl|  \int (\xi_{i,R_k} \ \psi_k) \ [\mathcal{H} \partial_x, \xi_{i, R_k}] \psi_k\Bigr|  &\leq & \|\xi_{i,R_k}\|_{L^\infty} \ \|\psi_k\|_{L^2} \ \|\xi'_{i,R_k}\|_{L^\infty} \ \|\psi_k\|_{L^2} \nonumber \\
\displaystyle &\lesssim  & \frac{1}{R_k} \; .\label{LB1}
\end{eqnarray}
 Therefore, recalling that $
  \xi^2_{1,R_k} + \xi^2_{2,R_k} =1 $ we end up with 
  \begin{equation}\label{L2}
L_{\beta} = \frac{\beta}{2} \int \psi'_k \mathcal{H} \psi_k + O(\frac{1}{R_k})  \ .
\end{equation}
Eventually, combining \eqref{L1} and \eqref{L2}, we obtain \eqref{sum I}. 
We can return back to our assumption. By Sobolev embedding, $(K(\psi_{k,i}))$ are  bounded for $i=1,2$, and we may pass to a subsequence to define $\delta_i(\varepsilon)= \lim_{k \rightarrow +\infty} K(\psi_{k,i})$. Since it holds for any $ \varepsilon>0 $ and $\delta_i(\varepsilon)$ are uniformly bounded, we can choose a sequence $\varepsilon_j \rightarrow 0$ such that the limits $\delta_i = \lim_{j \rightarrow +\infty} \delta_i(\varepsilon_j)$ exist for $i=1,2$, and we set $\delta= \delta_1 + \delta_2$. We differentiate three cases:
\begin{itemize}
\item[1st case:] If $0< \delta_1 < \delta$, then $\delta_2 >0$. It follows from \eqref{sum I} that
$$
\displaystyle I(\psi_k) =  I(\psi_{k,1}^j) + I(\psi_{k,2}^j) + O(\varepsilon_j) 
$$
where $ (\psi_{k,i}^j) $, $i=1,2$,  is  the sequence $ (\psi_{k,i}) $ associated with $ \varepsilon_j $, $ j\in \N $. 
Passing to the limit as $ k\to +\infty $, using \eqref{dila} and that $(\psi_k)$ is a minimizing sequence of $M^\delta$,  it follows that 
\begin{eqnarray}
M^\delta  & \geq & M^{\delta_1(\varepsilon_j)} + M^{\delta_2(\varepsilon_j)} + O(\varepsilon_j) \nonumber \\
\displaystyle &\geq &  \bigg[ \bigg(\frac{\delta_1(\varepsilon_j)}{\delta} \bigg)^\frac{2}{3} + \bigg(\frac{\delta_2(\varepsilon_j)}{\delta} \bigg)^\frac{2}{3} \bigg ] M^\delta + O(\varepsilon_j). \nonumber
\end{eqnarray}
Taking  $j \rightarrow +\infty$, we get the contradiction
$$ M^\delta \geq \bigg[ \bigg(\frac{\delta_1}{\delta} \bigg)^\frac{2}{3} + \bigg(\frac{\delta_2}{\delta} \bigg)^\frac{2}{3} \bigg ] M^\delta > M^\delta. $$

\item[2nd case:] \ If $\delta_1 =0$ or $\delta_1 = \delta$, we shall only treat the case $ \delta_1=0 $ since the other one  is similar. Recall that $ \delta_1=0 $ forces $ \delta_2=\delta $. We can check that
\begin{eqnarray}\nonumber
\displaystyle I(\psi_k) &=&  I(\psi_{k,1}^j) + I(\psi_{k,2}^j) + O(\varepsilon_j)\nonumber\\
\displaystyle & \geq & \int_{|x-y| \leq R_k(\varepsilon_j)} |\psi_k|^2 + |\partial_x \psi_k|^2 \ dx \ + \ M^{K(\psi_{k,2}^j)}  + O(\varepsilon_j) \nonumber \\
\displaystyle & \geq & l + \bigg( \frac{K(\psi_{k,2}^j)}{\delta} \bigg)^\frac{2}{3} M^\delta + O(\varepsilon_j) \nonumber
\end{eqnarray}
using \eqref{sum I}, \eqref{dila} and assumption (P3). Letting $k\to +\infty  $ and then $ j \rightarrow +\infty$, and recalling that $l >0$, we get
$$ M^\delta \geq l + M^\delta > M^\delta$$
which is a contradiction.
\item[3rd case:] \ If $\delta_1 > \delta$ or $\delta_1 <0$; in the former, it follows from \eqref{sum I} that 
$$ I(\psi_k) \geq I(\psi_{k,1}^j) + O(\varepsilon_j) \geq \bigg( \frac{K(\psi_{k,1}^j)}{\delta} \bigg)^\frac{2}{3} M^\delta + O(\varepsilon_j).$$
Letting $k\to +\infty  $ and then $ j \rightarrow +\infty$ we get the following contradiction
$$ M^\delta \geq \bigg(\frac{\delta_1}{\delta} \bigg)^\frac{2}{3} M^\delta > M^\delta \quad \text{as} \ \delta_1 > \delta .$$
Finally, the latter can be treated in a similar way since $\delta_1 <0$ forces  $ \delta_2>\delta $.
\end{itemize}
Eventually, (P1) holds. Yet, as we are dealing with even functions, we notice that in assumption (P1), we shall assume that $y_k=0$ since a non-bounded $(y_k)$ will lead to a contradiction. Indeed, taking $ \varepsilon =L/4 $ and  $ |y_k|>R_{L/4} $,  (P1) would lead to 
$$
L=\int_{\R} \rho_k(x) \, dx \ge 2 \int_{|x-y_k|\le R_{L/4} } \rho_k(x) \, dx \ge \frac{3}{2} L \; .
$$
\\
Now, since $ (\psi_k) $ is bounded in $ H^1(\mathbb{R}) \hookrightarrow L^\infty(\mathbb{R})$, it follows from ($P_1$) that for any $ \varepsilon>0 $  there exists $ R_\varepsilon >0 $ with $ R_\varepsilon\to +\infty $ as $ \varepsilon \to 0 $ such that 
$$ \int_{\mathbb{R} \setminus [-R_\varepsilon, R_\varepsilon]} |\psi_k|^3 \ dx \leq \|\psi_k\|_{L^\infty} 
 \int_{\mathbb{R} \setminus [-R_\varepsilon, R_\varepsilon]} |\psi_k|^2 \ dx 
 \le C \, \int_{\mathbb{R} \setminus [-R_\varepsilon, R_\varepsilon]} \rho_k \, dx \le \varepsilon, \quad \quad \forall k\in \N^* .$$
Since  $ \psi \in H^1(\R) \hookrightarrow L^3(\R) $, we deduce that for any $ k\in \N^* $ and $ \varepsilon>0 $,  
$$ \int_{\mathbb{R} \setminus [-R_\varepsilon, R_\varepsilon]} |\psi_k-\psi|^3 \ dx \lesssim
\Bigl(  \int_{\mathbb{R} \setminus [-R_\varepsilon, R_\varepsilon]} |\psi_k|^3 \ dx +  \int_{\mathbb{R} \setminus [-R_\varepsilon, R_\varepsilon]} |\psi|^3 \ dx\Bigr) \lesssim \varepsilon + \alpha(\varepsilon) 
$$
with $ \alpha(y) \to 0 $ as $ y\to 0 $. Combining this last inequality with \eqref{L3loc conv}, we infer that 
\begin{equation}\label{L3 conv}
\psi_k \longrightarrow \psi \quad \quad \text{in} \ L^3(\mathbb{R}).
\end{equation}
Now, according to Lemma \ref{coercive}, $I$ is lower semi-continuous with respect to the weak $H^1$-topology so that
$$ I(\psi) \leq \lim_{k \rightarrow + \infty} I(\psi_k) = M^\delta.$$
Also, owing to \eqref{L3 conv}, we get $K(\psi)= \delta$. Hence, $I(\psi)= M^\delta= \displaystyle \lim_{k\to +\infty} I(\psi_k) $ and $\psi$ is a minimizer of $M^\delta$. Moreover, this last equality together with the weak $ H^1$ convergence \eqref{weak conv} and again Lemma  \ref{coercive} ensure that
\begin{equation}\label{H1 conv}
\psi_k \longrightarrow \psi \quad \text{in} \  H^1_e.
\end{equation}

We now consider the case $\alpha=0$ that corresponds to the Benjamin-Ono case.
In this case, we can perform a similar analysis with some slight modifications since this time we work in $ H^{1/2}(\mathbb{R}) $ and thus take  $\rho_k$ to be 
$$\rho_k := |\psi_k|^2 + |D_x^\frac{1}{2} \psi_k|^2.$$
The main difference with respect to the case $ \alpha>0 $ is the way to disclaim the vanishing case due to the nonlocal property of the $ H^{1/2}$-norm. Let us explain how we can proceed. We introduce a smooth non negative function $ \eta\in C^\infty_c([-2,2]) $ such that $ \eta \equiv 1 $ on $[-1,1] $ and $ \sum_{m\in \mathbb{Z}} \eta(\cdot-m)^2\equiv 1 $ on $\mathbb{R}$. Denoting $ \eta(\cdot-m) $ by $\eta_k$, and using the fact that $\eta_m \eta_{m'} \equiv 0$ on $\mathbb{R}$  as soon as $ |m-m'|\ge 2$, the following chain of inequalities holds :
$$
\bigg |\int_{\mathbb{R}}  \psi_k^3 \ \bigg | \le \int_{\mathbb{R}}  |\psi_k|^3=\int_{\mathbb{R}} |\sum_{m \in \mathbb{Z}} \eta_m \psi_k|^3 \lesssim \sum_{m\in \mathbb{Z}} |\eta_m \psi_k|^3\; .
$$
Therefore, by the Sobolev embedding $ H^{1/6}(\mathbb{R}) \hookrightarrow L^3(\mathbb{R}) $, we get
\begin{eqnarray*}
\bigg |\int_{\mathbb{R}}  \psi_k^3 \ \bigg | & \lesssim &  \sum_{k\in\mathbb{Z}} \|\eta_m \psi_k\|_{L^2}^{5/2} \  \|\eta_m \psi_k\|_{H^{1/2}}^{1/2} \\
& \lesssim & \sup_{m \in \mathbb{Z}}   \|\eta_m \psi_k\|_{L^2}^{1/2}  \|\eta_m \psi_k\|_{H^{1/2}}^{1/2} \sum_{m\in\mathbb{Z}} \int_{\mathbb{R}} \eta_m^2 \psi_k^2 \\
&  \lesssim & \sup_{m\in\mathbb{Z}}  \|\eta_m \psi_k\|_{L^2}^{1/2}  \|\eta_m \psi_k\|_{H^{1/2}}^{1/2} \|\psi_k\|_{L^2}^2 \ .
\end{eqnarray*}
Since, on account of assumption ($P_2$),
$$
\sup_{m\in\mathbb{Z}} \|\eta_m \psi_k\|_{L^2}^2 \le \sup_{m\in\mathbb{Z}} \int_{m-2}^{m+2} \psi_k^2 \longrightarrow  0 \quad \text{as}\;  k\to \infty
$$
  and by Sobolev inequality, $  \|\eta_m \psi_k\|_{H^{1/2}}\lesssim \|\psi_k\|_{H^{1/2}}\lesssim 1$, $ \forall m \in \mathbb{Z}$,  this forces $\displaystyle \int_{\mathbb{R}}  \psi_k^3\to 0 $ which hence contradicts $  K(\psi_k)=\lambda>0 $ for all $k\in \mathbb{N}$.
  
Finally, it remains to prove the uniqueness of the solution to $ M^{K(Q_{KdV})}_{1,1,0} $ and $M^{K(Q_{BO})}_{1,0,1}$. We notice that, in the case $\alpha>0 $, a solution $ \psi $ to $M^\lambda_{1,\alpha,\beta} $ has to satisfy the associated Euler-Lagrange functional equation
\begin{equation}\label{EL}
\langle -\alpha \psi''-\beta \mathcal{H} \psi'+\psi -\frac{\mu}{2} \psi^2,  f \rangle_{H^{-1}, H^{1}} =0 \ , \quad \quad \forall f\in H^1_e(\mathbb{R}), 
\end{equation}
for some $\mu \in \mathbb{R}$ called the Lagrange multiplier. This is justified as both $\psi \mapsto I_{1,\alpha,\beta}(\psi)$ and $\psi \mapsto K(\psi)$ are $C^1$-functionals from $H^1(\mathbb{R})$ to $\mathbb{R}$ when $ \alpha>0 $ and from $H^{1/2}(\mathbb{R}) $ into $\mathbb{R}$ when $ \alpha=0 $. We remark that due to the even parity of the left-hand side member of the $ H^{-1},H^1$-duality product in \eqref{EL}, the latter is also orthogonal to any odd test function in $H^1(\mathbb{R})$ and thus satisfies 
\begin{equation}\label{eqEL2}
-\alpha \psi''-\beta \mathcal{H} \psi'+\psi =\frac{\mu}{2} \psi^2
\end{equation}
at least in the distributional sense. In view of Lemma \ref{coercive}, a standard bootstrapping argument ensures that $ \psi \in H^\infty(\mathbb{R}) $ and thus is actually a strong solution to \eqref{eqEL2}. In the case $ (\alpha,\beta)=(1,0)$, as $ Q_{KdV} $ is the unique even $ H^1(\mathbb{R}) $ solution to \eqref{KdV soliton} (cf. \cite{Berestycki Lions 1983}), it forces $ \varphi = C \, Q_{KdV} $ for some $ C\in \R $, but then the constraint $ K(\varphi)=K(Q_{KdV}) $ ensures that $ C=1 $ and thus $ \psi=Q_{KdV} $. This proves that $ Q_{KdV} $ is the unique solution to $ M^{K(Q_{KdV})}_{1,1,0}$. \\
The proof that $Q_{BO}$  is the unique solution to $M^{K(Q_{BO})}_{1,0,1}$ is exactly the same, only this time using that $\psi \mapsto I_{1,0,1}(\psi)$ and $\psi \mapsto K(\psi)$ are $C^1$-functionals from $H^{1/2}(\mathbb{R})$ to $\mathbb{R}$ and that $Q_{BO} $ is the unique solution to \eqref{BO soliton} (cf. \cite{Amick Toland 1991}, \cite{Benjamin 1967}).
\end{proof}

\begin{corollary} \label{gss}
Let $c=1$. For any $\alpha \in ]0,1] $ and any $\beta >-2\sqrt{\alpha} $, there exists an even  ground state solution $\varphi$ to \eqref{Phimaineq2}. 
\end{corollary}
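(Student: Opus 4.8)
The existence of an even minimizer is already secured by Proposition \ref{minimizer}; the remaining task is purely to convert a constrained minimizer into a genuine solution of \eqref{Phimaineq2} by a rescaling, and then to verify that the rescaled function is again a minimizer for a suitable value of the constraint. The plan is therefore as follows. Fix $\alpha\in]0,1]$ and $\beta>-2\sqrt{\alpha}$, and pick any $\lambda>0$, say $\lambda=1$. By Proposition \ref{minimizer} there exists $\psi\in H^1_e(\mathbb{R})$ solving the constrained minimization problem $M^\lambda_{1,\alpha,\beta}$.

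Next I would extract the Euler--Lagrange equation. Since $\alpha>0$, the argument already given in the proof of Proposition \ref{minimizer} shows that $\psi$ satisfies \eqref{eqEL2}, that is,
$$-\alpha\psi''-\beta\mathcal{H}\psi'+\psi=\frac{\mu}{2}\psi^2$$
for some Lagrange multiplier $\mu\in\mathbb{R}$, with $\psi\in H^\infty(\mathbb{R})$ by bootstrapping. The crucial step is to pin down the sign of $\mu$. Pairing this equation with $\psi$ and integrating, the left-hand side equals $2I_{1,\alpha,\beta}(\psi)$ (using $\int\psi'\mathcal{H}\psi=\|\psi\|_{\dot{H}^{1/2}}^2$), while the right-hand side equals $3\mu K(\psi)$. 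Hence $2M^\lambda=2I_{1,\alpha,\beta}(\psi)=3\mu K(\psi)=3\mu\lambda$, and since $M^\lambda>0$ by Lemma \ref{subadd} and $\lambda>0$, this forces
$$\mu=\frac{2M^\lambda}{3\lambda}>0 \; .$$

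With $\mu>0$ in hand, I would set $\varphi:=\mu\psi$. Multiplying the equation for $\psi$ by $\mu$ turns the right-hand side $\mu\cdot\frac{\mu}{2}\psi^2$ into $\frac{1}{2}(\mu\psi)^2=\frac{1}{2}\varphi^2$, so $\varphi$ solves \eqref{Phimaineq2}; moreover $\varphi$ is clearly even and belongs to $H^1(\mathbb{R})$. It then remains to check that $\varphi$ is itself a ground state, i.e.\ that it solves a constrained minimization problem. For this I would invoke the dilation identity \eqref{dila}: since $\psi$ minimizes $M^\lambda_{1,\alpha,\beta}$ and $\varphi=\mu\psi$ with $\mu>0$, the scaling $v\mapsto\mu v$ carries minimizing sequences of $M^\lambda$ to minimizing sequences of $M^{\mu^3\lambda}$ (taking $\lambda_0=\mu^3\lambda$ in Lemma \ref{subadd}), so that $\varphi$ minimizes $M^{K(\varphi)}_{1,\alpha,\beta}$ with $K(\varphi)=\mu^3\lambda>0$. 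Thus $\varphi$ is an even ground state solution of \eqref{Phimaineq2}.

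The only genuinely substantive point is the positivity of $\mu$, which rests entirely on the strict positivity $M^\lambda>0$ from Lemma \ref{subadd}: were $\mu$ to vanish the rescaling would collapse to the trivial function, and were $\mu<0$ the cube-root appearing in the dilation identity \eqref{dila} would be problematic. Everything else is routine bookkeeping built on Proposition \ref{minimizer} and the dilation structure of the minimization problem.
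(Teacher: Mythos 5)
Your argument is correct and follows essentially the same route as the paper: existence of an even minimizer from Proposition \ref{minimizer}, the Euler--Lagrange equation \eqref{eqEL2}, positivity of the multiplier $\mu$ via the identity $2I_{1,\alpha,\beta}(\psi)=3\mu K(\psi)$ combined with $M^\lambda>0$, and a rescaling to absorb $\mu$. (Minor remark: your choice $\varphi=\mu\psi$ is the scaling actually consistent with the factor $\tfrac{1}{2}\varphi^2$ in \eqref{Phimaineq2}, whereas the paper writes $\tfrac{\mu}{2}\psi$, which appears to be a small slip.)
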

\begin{proof} We just  proved above that any solution $ \varphi $ to $ M^\lambda_{1,\alpha,\beta} $, with $\lambda>0 $,  satisfies \eqref{eqEL2}  for some $ \mu\in\mathbb{R} $ and belongs to $ H^\infty(\mathbb{R}) $. 
Multiplying \eqref{eqEL2} by $\varphi$, integrating over $\mathbb{R}$, we get
$$\int_\mathbb{R} \varphi^2 \ + \ \beta \int_\mathbb{R} (D_x^{\frac{1}{2}} \varphi)^2 \ + \alpha \int_\mathbb{R} (\varphi')^2 \ = 
\frac{\mu}{2} \int_\mathbb{R} \varphi^3  ,$$
that is,
\begin{equation}\label{oi}
2 \ I_{1,\alpha,\beta}(\varphi) = 3 \ \mu \ K(\varphi), \quad \text{or \ equivalently}, \quad \mu =\frac{2}{3} \ \frac{I_{1,\alpha,\beta}(\varphi)}{K(\varphi)} =\frac{2}{3}  \frac{I_{1,\alpha,\beta}(\varphi)}{\lambda}>0 \; .
\end{equation}
Hence, one can check that 
\begin{equation}
\psi_{1,\alpha,\beta}= \frac{\mu}{2} \ \varphi
\end{equation} 
is a non trivial solution to  \eqref{Phimaineq2} and to $ M^{\lambda (\mu/2)^3}_{1,\alpha,\beta} $. Therefore it is a ground state solution to \eqref{Phimaineq2}.

\end{proof}

\subsection{Limits}
At this point, we tend to prove that in the case $c=1$, the minimizers of $M^{K(Q_{KdV})}_{1,1,\beta_n}$ converge to the KdV soliton as $\beta_n$ converges to zero. Similarly, the minimzers of $M^{K(Q_{BO})}_{1,\alpha_n, 1}$ converge to the BO soliton as $\alpha_n$ converges to zero.

\subsubsection{Convergence Case 1: $\{\alpha =1, \ |\beta| \ll 1 \}$}

\begin{proposition} \label{Conv1}
Let $\beta_n \rightarrow 0$ and associate to it $(\psi_{1,1,\beta_n})_{n \geq 1}$ a sequence of minimizer solutions to the problem $ \displaystyle M^{K(Q_{KdV})}_{1,1,\beta_n}$. Then, $ (\psi_{1,1,\beta_n})_{n \geq 1} $ converges to the KdV soliton profile $Q_{KdV}$ in $H^1(\mathbb{R})$.
\end{proposition}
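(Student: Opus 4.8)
The plan is to show that the family of minimizers $\psi_n := \psi_{1,1,\beta_n}$ is, up to a vanishing error, a minimizing sequence for the \emph{limit} problem $M^{\lambda}_{1,1,0}$ with $\lambda := K(Q_{KdV})$, and then to invoke the strong convergence together with the uniqueness already established in Proposition \ref{minimizer}. Write $m_n := M^{\lambda}_{1,1,\beta_n} = I_{1,1,\beta_n}(\psi_n)$ and recall from \eqref{I} that for any $u\in H^1(\R)$ one has $I_{1,1,\beta_n}(u) = I_{1,1,0}(u) + \tfrac{\beta_n}{2}\|D_x^{\frac12}u\|_{L^2}^2$. I would first record the uniform coercivity: since $\beta_n\to 0$, we have $|\beta_n|\le \tfrac12$ for $n$ large, and then the elementary bound $1+\beta_n|\xi|+|\xi|^2 \ge c\,(1+|\xi|+|\xi|^2)$ holds with a constant $c>0$ independent of $n$. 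This is exactly the mechanism of Lemma \ref{coercive}, the only point being that the coercivity constant $C_{1,\beta_n}$ stays bounded away from $0$ as $\beta_n$ crosses zero. Consequently $I_{1,1,\beta_n}(u)\gtrsim \|u\|_{H^1}^2$ uniformly in $n$.

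Next I would pin down the energy levels. Testing the definition of $m_n$ with $Q_{KdV}$ (admissible since $K(Q_{KdV})=\lambda$) gives the upper bound
\[ m_n \le I_{1,1,\beta_n}(Q_{KdV}) = M^{\lambda}_{1,1,0} + \tfrac{\beta_n}{2}\,\|D_x^{\frac12}Q_{KdV}\|_{L^2}^2 = M^{\lambda}_{1,1,0} + O(\beta_n), \]
where I used that $Q_{KdV}$ is the unique minimizer of $M^\lambda_{1,1,0}$ (Proposition \ref{minimizer}), so that $I_{1,1,0}(Q_{KdV})=M^\lambda_{1,1,0}$. In particular $m_n$ is bounded, and the uniform coercivity then forces $(\psi_n)$ to be bounded in $H^1(\R)$; hence $\|D_x^{\frac12}\psi_n\|_{L^2}^2$ is bounded and $\tfrac{\beta_n}{2}\|D_x^{\frac12}\psi_n\|_{L^2}^2 = O(\beta_n)$. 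Combining this with $K(\psi_n)=\lambda$, which yields $I_{1,1,0}(\psi_n)\ge M^\lambda_{1,1,0}$, I obtain
\[ m_n = I_{1,1,0}(\psi_n) + \tfrac{\beta_n}{2}\|D_x^{\frac12}\psi_n\|_{L^2}^2 \ge M^\lambda_{1,1,0} + O(\beta_n). \]
Together with the upper bound this shows $m_n \to M^\lambda_{1,1,0}$, and therefore $I_{1,1,0}(\psi_n) = m_n - \tfrac{\beta_n}{2}\|D_x^{\frac12}\psi_n\|_{L^2}^2 \to M^\lambda_{1,1,0}$.

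At this stage $(\psi_n)\subset H^1_e$ satisfies $K(\psi_n)=\lambda$ for every $n$ and $I_{1,1,0}(\psi_n)\to M^\lambda_{1,1,0}$, i.e. it is a genuine minimizing sequence for the KdV minimization problem $M^{\lambda}_{1,1,0}$. By the concentration-compactness conclusion of Proposition \ref{minimizer} (case $\alpha=1$, $\beta=0$, so $\theta=1$), some subsequence converges strongly in $H^1_e(\R)$ to a solution of $M^\lambda_{1,1,0}$, which by the uniqueness part of that proposition must be $Q_{KdV}$. Finally, to pass from subsequential to full convergence I would use the standard argument: the limit $Q_{KdV}$ is independent of the subsequence, so every subsequence of $(\psi_n)$ admits a further subsequence converging to $Q_{KdV}$ in $H^1$, whence the whole sequence converges to $Q_{KdV}$ in $H^1(\R)$.

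I expect the bulk of the argument to be bookkeeping, since the hard analytic work (ruling out vanishing and dichotomy) is already contained in Proposition \ref{minimizer}. The only genuinely delicate point is the uniform lower bound on the coercivity constant $C_{1,\beta_n}$ as $\beta_n$ approaches $0$, in particular through negative values; this is handled by Lemma \ref{coercive} because $\beta_n\to 0$ keeps us in the regime of small $|\beta_n|$, where $1+\beta_n|\xi|+|\xi|^2$ is uniformly comparable to $1+|\xi|^2$.
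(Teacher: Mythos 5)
Your proposal is correct and follows essentially the same route as the paper: the same two-sided comparison of $M^{K(Q_{KdV})}_{1,1,\beta_n}$ with $M^{K(Q_{KdV})}_{1,1,0}$ (testing with $Q_{KdV}$ for the upper bound and using $H^1$-boundedness of the minimizers for the lower bound), followed by an appeal to the compactness and uniqueness statements of Proposition \ref{minimizer} and the standard subsequence argument to upgrade to convergence of the full sequence. Your explicit remark on the uniformity of the coercivity constant as $\beta_n$ crosses zero is a useful clarification of a point the paper only states in passing.
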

\begin{proof}
Let $\alpha=1$, $(\beta_n)_{n \geq 1} \subset \mathbb{R}^\ast $ such that $\displaystyle \lim_{n \rightarrow + \infty} \beta_n =0$, and let $(\psi_{1,1,\beta_n})$ be a sequence of minimizers of $M^{K(Q_{KdV})}_{1,1,\beta_n}$. According to Lemma \ref{coercive}, $I_{1,1,\beta_n}$ is equivalent to the $H^1$-norm uniformly for $ \beta\in [0,1/2] $, hence we get that $(\psi_{1,1,\beta_n})_{n\ge 1}$ is bounded in $H^1(\mathbb{R})$. Besides, we have
\begin{eqnarray}
\displaystyle M^{K(Q_{KdV})}_{1,1,0} &=& I_{1,1,\beta_n}(Q_{KdV}) -\frac{\beta_n}{2} \int_{\mathbb{R}} (D_x^{\frac{1}{2}}(Q_{KdV}))^2 \nonumber \\
\displaystyle &\geq & M^{K(Q_{KdV})}_{1,1,\beta_n} - \frac{|\beta_n|}{2} \int_{\mathbb{R}} (D_x^{\frac{1}{2}}(Q_{KdV}))^2 \nonumber 
\end{eqnarray}
while
\begin{eqnarray}\nonumber
\displaystyle  M^{K(Q_{KdV})}_{1,1,0} &\leq & I_{1,1,0}(\psi_{1,1,\beta_n}) =  I_{1,1,\beta_n}(\psi_{1,1,\beta_n}) - \frac{\beta_n}{2} \int_{\mathbb{R}} (D_x^{\frac{1}{2}}(\psi_{1,1,\beta_n}))^2 \nonumber \\
\displaystyle &=& M^{K(Q_{KdV})}_{1,1,\beta_n} - \frac{\beta_n}{2} \int_{\mathbb{R}} (D_x^{\frac{1}{2}}(\psi_{1,1,\beta_n}))^2 . \nonumber
\end{eqnarray}
Taking $\beta_n \rightarrow 0$, we get
\begin{equation}\label{pbconv1}
M^{K(Q_{KdV})}_{1,1,\beta_n} \longrightarrow M^{K(Q_{KdV})}_{1,1,0} \ .
\end{equation}
Hence, $\psi_{1,1,\beta_n}$ is a minimizing sequence of $M_{1,1,0}^{K(Q_{KdV})}$. By Proposition \ref{minimizer}, there exists a subsequence $(\psi_{1,1,\beta_{n_k}})$ that converges in $H^1(\mathbb{R})$ to a solution $\psi$ of $M_{1,1,0}^{K(Q_{KdV})}$. Recalling that according to Proposition \ref{minimizer}, $ Q_{KdV} $ is the unique solution to $M_{1,1,0}^{K(Q_{KdV})}$, we deduce that $(\psi_{1,1,\beta_{n_k}})$ converges to $Q_{KdV}$ in $H^1(\mathbb{R})$.
Finally, by the uniqueness of the possible limits in $H^1(\mathbb{R})$ of its subsequences, we deduce that $ \psi_{1,1,\beta_n} \rightarrow Q_{KdV} $  in $ H^1(\mathbb{R}) $. 
\end{proof}
\subsubsection{Convergence Case 2: $\{\alpha \ll 1, \ \beta = 1 \}$}
\begin{proposition} \label{pconv2}
Let $\alpha_n \rightarrow 0$, and $(\psi_{1,\alpha_n,1})$ be a sequence of minimizers to the constraint problem $M^{K(Q_{BO})}_{1,\alpha_n, 1}$. Then, $ (\psi_{1,\alpha_n,1})_{n \geq 1} $ converges to the BO soliton profile $Q_{BO}$ in $H^1(\mathbb{R})$.

\end{proposition}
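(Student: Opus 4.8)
The plan is to follow the architecture of the proof of Proposition~\ref{Conv1}: first obtain convergence in $H^{1/2}$ through the Benjamin--Ono ($\alpha=0$) case of Proposition~\ref{minimizer}, and then upgrade it to $H^1$ by exploiting the Euler--Lagrange equation in Fourier variables. Throughout, write $\psi_n:=\psi_{1,\alpha_n,1}$. By Lemma~\ref{coercive} each $\psi_n$ is a minimizer of $M^{K(Q_{BO})}_{1,\alpha_n,1}$, so it satisfies the Euler--Lagrange equation \eqref{eqEL2} with $\beta=1$,
\begin{equation}\label{ELBO}
\psi_n-\alpha_n\psi_n''-\mathcal{H}\psi_n'=\frac{\mu_n}{2}\psi_n^2 ,
\end{equation}
for a Lagrange multiplier $\mu_n>0$ given by \eqref{oi}.

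First I would prove $M^{K(Q_{BO})}_{1,\alpha_n,1}\to M^{K(Q_{BO})}_{1,0,1}$. The upper bound follows by testing with $Q_{BO}$, since $I_{1,\alpha_n,1}(Q_{BO})=M^{K(Q_{BO})}_{1,0,1}+\tfrac{\alpha_n}{2}\|Q_{BO}'\|_{L^2}^2$, and the lower bound follows from $I_{1,\alpha_n,1}\ge I_{1,0,1}$ together with $K(\psi_n)=K(Q_{BO})$. As $I_{1,\alpha_n,1}(\psi_n)=I_{1,0,1}(\psi_n)+\tfrac{\alpha_n}{2}\|\psi_n'\|_{L^2}^2$ with $I_{1,0,1}(\psi_n)\ge M^{K(Q_{BO})}_{1,0,1}$, the same comparison yields
\begin{equation}\label{alphader}
\frac{\alpha_n}{2}\|\psi_n'\|_{L^2}^2\le M^{K(Q_{BO})}_{1,\alpha_n,1}-M^{K(Q_{BO})}_{1,0,1}\longrightarrow 0 .
\end{equation}
Hence $I_{1,0,1}(\psi_n)\to M^{K(Q_{BO})}_{1,0,1}$ with $K(\psi_n)=K(Q_{BO})$, so $(\psi_n)$ is a minimizing sequence for $M^{K(Q_{BO})}_{1,0,1}$ in $H^{1/2}_e$. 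The $\alpha=0$ case of Proposition~\ref{minimizer}, the uniqueness of $Q_{BO}$ as minimizer, and the usual subsequence argument then give $\psi_n\to Q_{BO}$ in $H^{1/2}(\mathbb{R})$; in particular $\psi_n\to Q_{BO}$ in $L^2$ and in $L^4$, so $\psi_n^2\to Q_{BO}^2$ in $L^2$. Moreover \eqref{oi} gives $\mu_n=\tfrac23 M^{K(Q_{BO})}_{1,\alpha_n,1}/K(Q_{BO})\to 1$, using $M^{K(Q_{BO})}_{1,0,1}=\tfrac32 K(Q_{BO})$ from \eqref{gssol}.

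The hard part, and the crux of the proof, is to promote this to $H^1$ convergence, the obstruction being that $I_{1,\alpha_n,1}$ controls only the $H^{1/2}$-norm uniformly in $n$, so no uniform $H^1$ bound is available a priori and \eqref{alphader} only gives $\alpha_n\|\psi_n'\|_{L^2}^2\to0$. The key observation is that, taking the Fourier transform of \eqref{ELBO},
\begin{equation}\label{fourierBO}
\widehat{\psi_n}(\xi)=\frac{\mu_n/2}{1+|\xi|+\alpha_n\xi^2}\,\widehat{\psi_n^2}(\xi),
\end{equation}
and since $1+|\xi|+\alpha_n\xi^2\ge 1+|\xi|$, the multiplier obeys $\tfrac{\xi^2}{(1+|\xi|+\alpha_n\xi^2)^2}\le \tfrac{\xi^2}{(1+|\xi|)^2}\le 1$ uniformly in $n$ and $\xi$. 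Consequently
$$\|\psi_n'\|_{L^2}^2=\int_{\mathbb{R}}\frac{(\mu_n/2)^2\,\xi^2}{(1+|\xi|+\alpha_n\xi^2)^2}\,|\widehat{\psi_n^2}(\xi)|^2\,d\xi\le \Big(\frac{\mu_n}{2}\Big)^2\|\psi_n\|_{L^4}^4,$$
which is bounded, as $\mu_n$ is bounded and $(\psi_n)$ is bounded in $H^{1/2}\hookrightarrow L^4$. This supplies the missing uniform $H^1$ bound, so $\psi_n\rightharpoonup Q_{BO}$ weakly in $H^1$, the limit being identified by the $H^{1/2}$ convergence and $Q_{BO}\in H^1$.

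It remains to establish norm convergence of the derivatives. In the representation above the integrand multiplier converges pointwise to $\tfrac{(1/2)^2\xi^2}{(1+|\xi|)^2}$ and stays uniformly bounded, while $|\widehat{\psi_n^2}|^2\to|\widehat{Q_{BO}^2}|^2$ in $L^1$ because $\psi_n^2\to Q_{BO}^2$ in $L^2$. Splitting the difference of integrals and applying dominated convergence to the multiplier part together with the $L^1$ control on the remainder, I would obtain
$$\|\psi_n'\|_{L^2}^2\longrightarrow \int_{\mathbb{R}}\frac{(1/2)^2\,\xi^2}{(1+|\xi|)^2}\,|\widehat{Q_{BO}^2}(\xi)|^2\,d\xi=\|Q_{BO}'\|_{L^2}^2,$$
the last equality coming from \eqref{BO soliton} (with $c=1$) written in Fourier. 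Norm convergence of $\|\psi_n'\|_{L^2}$ together with the weak convergence $\psi_n'\rightharpoonup Q_{BO}'$ in $L^2$ forces $\psi_n'\to Q_{BO}'$ strongly in $L^2$; combined with $\psi_n\to Q_{BO}$ in $L^2$ from the $H^{1/2}$ step, this gives $\psi_n\to Q_{BO}$ in $H^1(\mathbb{R})$. I expect the uniform multiplier estimate underlying \eqref{fourierBO} and the passage to the limit in the derivative norm to be the only delicate points, the $H^{1/2}$ convergence being a direct transcription of the argument for Proposition~\ref{Conv1}.
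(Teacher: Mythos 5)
Your proof is correct and follows the same overall strategy as the paper's: two-sided comparison to get $M^{K(Q_{BO})}_{1,\alpha_n,1}\to M^{K(Q_{BO})}_{1,0,1}$, identification of $(\psi_n)$ as a minimizing sequence for $M^{K(Q_{BO})}_{1,0,1}$, the $\alpha=0$ case of Proposition \ref{minimizer} together with uniqueness of $Q_{BO}$ to get convergence in $H^{1/2}$, and the Euler--Lagrange equation written on the Fourier side (the paper's \eqref{psialpha}) to gain the regularity that the functional $I_{1,\alpha_n,1}$ does not provide uniformly. The only genuine divergence is in the final upgrade from $H^{1/2}$ to $H^1$: the paper iterates the Fourier bootstrap once more (multiplying \eqref{psialpha} by $\sqrt{1+\xi^2}$ and using that $H^1(\mathbb{R})$ is an algebra) to obtain a uniform $H^2$ bound, and then concludes by interpolation between $H^{1/2}$ and $H^2$; you instead stop at the uniform $H^1$ bound and close by proving norm convergence $\|\psi_n'\|_{L^2}\to\|Q_{BO}'\|_{L^2}$ through a dominated-convergence argument on the multiplier $\xi^2(1+|\xi|+\alpha_n\xi^2)^{-2}$, using $\mu_n\to1$ and $\psi_n^2\to Q_{BO}^2$ in $L^2$, which combined with weak convergence of $\psi_n'$ in $L^2$ yields strong convergence of the derivatives. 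Both closings are sound: the paper's is shorter once the $H^2$ bound is observed, while yours avoids the extra regularity step at the cost of a slightly more delicate passage to the limit in the derivative norm.
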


\begin{proof}
Let $\beta=1$ and $(\alpha_n)_{n \geq 1} \subset \mathbb{R}^\ast $ be any sequence such that $\displaystyle \lim_{n \rightarrow + \infty} \alpha_n =0$, and let $(\psi_{\alpha_n})$ be an associated sequence of minimizers of $M^{K(Q_{BO})}_{1,\alpha_n,1}$ with $ (\mu_n) $ the associated sequence of Lagrange multiplier in \eqref{eqEL2}. Contrarily to the preceding case where $\alpha$ was converging to zero, we are only left with boundedness of $(\psi_{1,\alpha_n,1})$ in $ H^{1/2}_e(\mathbb{R}) $ since $ I_{1,\alpha_n,1}(\psi_{1,\alpha_n,1}) \ge \|\psi_{1,\alpha_n,1}\|_{H^{1/2}} $. To get the boundedness in $ H^1(\mathbb{R}) $, we have to work a little more.
First, we notice that $\mu_n$ is bounded since 
\begin{equation}\label{mu2}
0\le \mu_n = \frac{2}{3} \ \frac{I_{1,\alpha_n,1}(\psi_{1,\alpha_n,1})}{K(Q_{BO})} = \frac{2}{3} \ \frac{M^{K(Q_{BO})}_{1,\alpha_n,1}}{K(Q_{BO})} \ \leq \ \frac{2}{3} \ \frac{I_{1,\alpha_n,1}(Q_{BO})}{K(Q_{BO})}\;  . 
\end{equation}
Second, applying the Fourier Transform to \eqref{eqEL2}, we get
\begin{equation}\label{psialpha}
 \bigg [ 1+ |\xi| + \alpha_n |\xi|^2 \bigg ] \widehat{\psi_{1,\alpha_n,1}} (\xi) =  \mu_n  \, \widehat{\psi^2_{1,\alpha_n,1}} \quad \text{for \ any} \ \xi \in \mathbb{R}.
\end{equation}
By Plancherel equality and Sobolev inequalities, we have 
$$ \|\widehat{\psi^2_{1,\alpha_n,1}}\|_{L^2} = \|\psi_{1,\alpha_n,1}\|^2_{L^4} \lesssim \|\psi_{1,\alpha_n,1}\|^2_{H^{\frac{1}{2}}} \ .$$
 So, taking the $ L^2$-norm of both sides of \eqref{psialpha} we obtain that $ (\psi_{1,\alpha_n,1}) $ is actually bounded in $H^1(\mathbb{R}) $. Note that, multiplying  \eqref{psialpha} by $ \sqrt{1+\xi^2} $, taking again the $ L^2$-norm of both sides of \eqref{psialpha} and using that $H^1(\R)$ is an algebra, we even obtain that $ (\psi_{1,\alpha_n,1}) $ is bounded in $H^2(\mathbb{R})$.
\\
Now, we notice  that 
\begin{eqnarray}\nonumber
\displaystyle M^{K(Q_{BO)}}_{1,0,1} &=& I_{1,\alpha_n,1}(Q_{BO}) - \frac{\alpha_n}{2} \int (\partial_x Q_{BO})^2 \geq M^{K(Q_{BO})}_{1,\alpha_n,1} - \frac{\alpha_n}{2} \int (\partial_x Q_{BO})^2 \nonumber
\end{eqnarray}
while, on the other hand,
\begin{eqnarray}\nonumber
\displaystyle M^{K(Q_{BO})}_{1,0,1} & \leq & I_{1,0,1}(\psi_{1,\alpha_n,1}) = I_{1,\alpha_n,1}(\psi_{1,\alpha_n,1}) - \frac{\alpha_n}{2} \int (\partial_x \psi_{1,\alpha_n,1})^2 \nonumber \\
\displaystyle &=&  M^{K(Q_{BO})}_{1,\alpha_n,1} - \frac{\alpha_n}{2} \int (\partial_x \psi_{1,\alpha_n,1})^2 .
\end{eqnarray}
Thanks to the boundedness of $(\psi_{1,\alpha_n,1})$ in $H^1$, we may pass to the limit on the above two inequalities and obtain the convergence
\begin{equation}\label{prbconv}
M^{K(Q_{BO})}_{1,\alpha_n,1} \longrightarrow M^{K(Q_{BO})}_{1,0,1} \quad \text{as} \ \alpha_n \rightarrow 0 \ .
\end{equation}
Hence, $(\psi_{1,\alpha_n,1})$ is a minimizing sequence for $M^{K(Q_{BO})}_{1,0,1}$. Proposition \ref{minimizer} with $\alpha=0$ then ensures that there exists a subsequence $ (\psi_{1,\alpha_{n_k},1}) $  which converges in $ H^{1/2}_e(\mathbb{R}) $ to a minimizer $ \psi $ of $M^{K(Q_{BO})}_{1,0,1}$ and the uniqueness of the solution to $M^{K(Q_{BO})}_{1,0,1}$ ensures, as in the proof of Proposition \ref{Conv1}, that $(\psi_{1,\alpha_n,1}) $ converges to $Q_{BO} $ in  $ H^{1/2}(\mathbb{R}) $. Finally, the boundedness of  $(\psi_{1,\alpha_n,1})$ in $ H^2(\mathbb{R}) $ forces this convergence to hold also in $ H^1(\mathbb{R}) $.
\end{proof}
\section{Uniqueness and Orbital Stability of the ground state solutions}\label{section 3}
In this section, we will first prove the uniqueness of the even ground state solutions of \eqref{Phimaineq2} and then their orbital stability over the two cases of study.
\begin{proposition} \label{uniqueness}
 There exists $\tilde{\beta}_0 >0$ (respectively $\tilde{\alpha}_0>0$) such that $\forall \ |\beta| < \tilde{\beta}_0$ (respectively $\forall \ 0 \leq \alpha <\tilde{\alpha}_0$)  there exists a unique even ground state solution $Q_{1,1,\beta}$ to \eqref{Phimaineq2} with $\alpha=1$ (respectively $Q_{1,\alpha, 1}$ to \eqref{Phimaineq2} with $\beta =1$). Besides, the maps
$$ ]-\tilde{\beta}_0, \tilde{\beta}_0[ \ni \beta \longmapsto Q_{1,1,\beta} \in H^1(\mathbb{R}) $$
and 
$$  [0, \tilde{\alpha}_0[ \ni \alpha \longmapsto Q_{1,\alpha,1} \in H^1(\mathbb{R}) $$
are continuous.
\end{proposition}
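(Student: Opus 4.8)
The plan is to view both statements as perturbations of the limiting KdV and BO problems and to run an implicit function theorem (IFT) argument whose decisive ingredient is the invertibility, on the even subspace, of the operator obtained by linearizing \eqref{Phimaineq2} at $Q_{KdV}$ (resp. $Q_{BO}$). Local uniqueness and continuity of the branch come directly from the IFT; to promote local uniqueness to uniqueness among \emph{all} even ground states I will use the convergence already established in Propositions \ref{Conv1} and \ref{pconv2} together with the fact that every even ground state is a minimizer solving \eqref{Phimaineq2} with Lagrange multiplier $\mu=1$. The latter, combined with \eqref{gssol}, the identity $I(\varphi)=M^{K(\varphi)}$ and the scaling $M^\lambda=\lambda^{2/3}M^1$ from \eqref{dila}, pins the value $K(\varphi)=\big(\tfrac23 M^1\big)^3$ for all ground states at fixed parameters, so that they all sit at one constraint level and, after rescaling to the fixed level $K(Q_{KdV})$ (resp. $K(Q_{BO})$), inherit the convergence to the corresponding soliton.

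For $\alpha=1$ I set $G(\beta,\varphi)=\varphi-\varphi''-\beta\,\mathcal{H}\varphi'-\tfrac12\varphi^2$, a smooth map from $\mathbb{R}\times H^1_e(\mathbb{R})$ into $H^{-1}_e(\mathbb{R})$ (smoothness of the quadratic term uses that $H^1(\mathbb{R})$ is an algebra), with $G(0,Q_{KdV})=0$ and $D_\varphi G(0,Q_{KdV})=\mathcal{L}_{KdV}:=1-\partial_x^2-Q_{KdV}$. Because $1-\partial_x^2$ is an isomorphism of $H^1_e$ onto $H^{-1}_e$ and multiplication by the decaying $Q_{KdV}$ is compact, $\mathcal{L}_{KdV}$ is Fredholm of index zero; its $L^2$-kernel is spanned by the odd function $Q_{KdV}'$ (non-degeneracy of the KdV soliton, cf. \cite{Berestycki Lions 1983}), hence is trivial on $H^1_e$, so $\mathcal{L}_{KdV}$ is an isomorphism there. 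The IFT then gives $\tilde\beta_0>0$, a neighborhood $U$ of $Q_{KdV}$ in $H^1_e$, and a $C^1$ branch $\beta\mapsto Q_{1,1,\beta}\in U$ of solutions, unique in $U$. Since by the first paragraph every even ground state converges to $Q_{KdV}$ in $H^1$ as $\beta\to0$, for $|\beta|<\tilde\beta_0$ small each of them lies in $U$ and therefore coincides with $Q_{1,1,\beta}$; continuity of $\beta\mapsto Q_{1,1,\beta}$ is part of the IFT conclusion.

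The case $\beta=1$, $\alpha\searrow 0$ follows the same scheme with $\mathcal{L}_{BO}:=1+|D|-Q_{BO}$ (where $|D|=-\mathcal{H}\partial_x$), whose non-degeneracy on even functions — kernel spanned by the odd $Q_{BO}'$ — is classical (cf. \cite{Amick Toland 1991}, \cite{Benjamin 1967}). Here lies the main obstacle: as $\alpha\to0$ the order of \eqref{Phimaineq2} drops from two to one, so $(\alpha,\varphi)\mapsto\varphi-\alpha\varphi''-\mathcal{H}\varphi'-\tfrac12\varphi^2$ is not well behaved on any single fixed pair of spaces and a naive IFT is ill-posed. I will bypass this by using the resolvent formulation $\varphi=\mathcal{A}_\alpha^{-1}\big(\tfrac12\varphi^2\big)$, where $\mathcal{A}_\alpha=1-\alpha\partial_x^2-\mathcal{H}\partial_x$ has symbol $1+|\xi|+\alpha\xi^2$. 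The crucial point is that $\mathcal{A}_\alpha^{-1}\to\mathcal{A}_0^{-1}=(1+|D|)^{-1}$ in the operator norm from $L^2$ to $H^{1/2}$ as $\alpha\to0$, since the symbol of the difference is $O(\sqrt\alpha)$ uniformly in $\xi$; consequently $F(\alpha,\varphi)=\varphi-\mathcal{A}_\alpha^{-1}\big(\tfrac12\varphi^2\big)$ is continuous in $\alpha$ and $C^1$ in $\varphi$ on $H^{1/2}_e$, with $D_\varphi F(0,Q_{BO})=\mathcal{A}_0^{-1}\mathcal{L}_{BO}$ an isomorphism. The continuous-parameter IFT then yields a locally unique continuous branch $\alpha\mapsto Q_{1,\alpha,1}\in H^{1/2}_e$, which the $H^2$-boundedness of Proposition \ref{pconv2} upgrades to $H^1$; global uniqueness and $H^1$-continuity follow as before.

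I expect the two genuinely delicate points to be both confined to the BO limit: establishing the uniform operator-norm convergence $\mathcal{A}_\alpha^{-1}\to\mathcal{A}_0^{-1}$ and the continuity of $\alpha\mapsto D_\varphi F(\alpha,\cdot)$ needed for the parametric IFT, and transferring the convergence of fixed-level minimizers (Proposition \ref{pconv2}) to the ground states through the $K$-scaling identity. Should the resolvent bookkeeping prove awkward, an alternative is a direct estimate on the difference $w=\varphi_1-\varphi_2$ of two even ground states: $w$ solves $w-\alpha w''-\mathcal{H}w'=\tfrac12(\varphi_1+\varphi_2)\,w$ and, because the two ground states share the same $K$, satisfies $\int(\varphi_1^2+\varphi_1\varphi_2+\varphi_2^2)\,w=0$; pairing the equation with $w$ and using that $-\alpha\partial_x^2$ contributes the nonnegative term $\alpha\|w'\|_{L^2}^2$ reduces matters to the constrained coercivity of $\mathcal{L}_{BO}$ (resp. $\mathcal{L}_{KdV}$) on even functions orthogonal to $Q_{BO}^2$ (resp. $Q_{KdV}^2$) — a known consequence of the orbital stability of the BO and KdV solitons — which forces $w=0$ for $\alpha$ (resp. $|\beta|$) small.
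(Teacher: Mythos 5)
Your proposal is correct in outline and reaches the same conclusion, but the core uniqueness mechanism is genuinely different from the paper's. You run an implicit function theorem at the limiting soliton, using that $\mathcal{L}_{KdV}=1-\partial_x^2-Q_{KdV}$ (resp.\ $\mathcal{L}_{BO}=1+|D|-Q_{BO}$) is Fredholm of index zero with kernel spanned by the odd derivative of the soliton, hence invertible on the even subspace; local uniqueness then comes from the IFT neighbourhood, and the preliminary step (all even ground states share the constraint level $K=(\tfrac23 M^1)^3$ by \eqref{gssol} and \eqref{dila}, hence rescale to minimizers of $M^{K(Q_{KdV})}$ and converge by Propositions \ref{Conv1}--\ref{pconv2}) is exactly the paper's first step and correctly forces every ground state into that neighbourhood. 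The paper instead follows Kenig--Martel--Robbiano: for two ground states it exploits the cancellation $\Upsilon_{1,1,\beta}(\varphi^1-\varphi^2)=\tfrac12(\varphi^1-\varphi^2)^2$, so that the normalized difference $w$ is almost annihilated by $\Upsilon_{1,1,\beta}$, and then contradicts the coercivity of $\langle \Upsilon_{1,1,0}\cdot,\cdot\rangle$ under \emph{almost}-orthogonality to $Q$ and $Q'$ (the first from evenness, the second from $\Upsilon_{1,1,0}\,\partial_c Q_{c}|_{c=1}=Q$). What each route buys: the paper's quadratic-form argument needs only the coercivity statements of \cite{Bona et al 1987}, \cite{Bennett et al. 1983}, \cite{Weinstein 1987} plus the almost-orthogonality lemma of \cite{Cote et al 2016}, and it absorbs the singular limit $\alpha\searrow 0$ for free because $-\alpha\partial_x^2$ contributes a term of favourable sign in the quadratic form (no change of functional setting is needed); your IFT requires the full kernel characterization (for BO, the Amick--Toland non-degeneracy) and the extra resolvent reformulation $F(\alpha,\varphi)=\varphi-\mathcal{A}_\alpha^{-1}(\tfrac12\varphi^2)$ with the operator-norm convergence $\mathcal{A}_\alpha^{-1}\to\mathcal{A}_0^{-1}$ (your $O(\sqrt{\alpha})$ symbol bound from $L^2$ to $H^{1/2}$ is correct) to tame the drop in order, but in exchange it yields a $C^1$ branch in $\beta$ rather than mere continuity, and local uniqueness among \emph{all} nearby even solutions, not only ground states. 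Your fallback argument in the last paragraph (pairing the equation for $w$ with $w$ and using coercivity of $\mathcal{L}$ on even functions orthogonal to $Q^2$, the exact orthogonality $\int(\varphi_1^2+\varphi_1\varphi_2+\varphi_2^2)w=0$ coming from the shared constraint level) is closest in spirit to the paper's proof, differing only in the choice of the second orthogonality condition; it is also viable since $\langle \mathcal{L}Q,Q\rangle<0$ makes $Q^2\propto\mathcal{L}Q$ an admissible constraint direction.
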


\begin{proof}\text{ } \\
\begin{center}
\underline{\bf{Case 1}:} $\{\alpha = 1,\ |\beta| \ll 1 \}$:
\end{center}
We start by proving that $\forall \ \varepsilon>0, \ \exists \ \beta_{\varepsilon} >0$ such that any $\varphi_{1,1,\beta}$ even ground state solution to \eqref{Phimaineq2} with $\alpha =1$ satisfies
\begin{equation}\label{u1}
\| \varphi_{1,1,\beta} - Q_{KdV}\|_{H^1} <\frac{\varepsilon}{2} \quad \quad \forall \ 0 <|\beta| < \beta_\varepsilon \ .
\end{equation}
This is clearly equivalent to proving that  for any sequence $(\beta_n)_{n \geq 1} \subset ]0,1]$ that tends to zero as $n \rightarrow + \infty$, and any sequence of associated even ground state solutions $(\varphi_{\beta_n})_{n \geq 1}$ to \eqref{Phimaineq2} with $\alpha=1$, it holds 
$$
\varphi_{\beta_n} \to Q_{KdV} \quad \text{in} \quad H^1(\mathbb{R}) \; .
$$
So, let be given a sequence $(\beta_n)_{n \geq 1} \subset \mathbb{R}^{\ast}_{+}$ that tends to zero as $n \rightarrow + \infty$, and a sequence of even ground state solutions $(\varphi_{\beta_n})_{n \geq 1}$ to \eqref{Phimaineq2} with $(\alpha,\beta)=(1,\beta_n)$.
Defining $\Phi_{\beta_n}$ in the following way 
\begin{equation} \label{uni1}
\Phi_{\beta_n} = \bigg ( \frac{K(Q_{KdV})}{K(\varphi_{\beta_n})} \bigg)^\frac{1}{3} \varphi_{\beta_n} \ ,
\end{equation}
  we get that $K(\Phi_{\beta_n}) = K(Q_{KdV}) >0$ and thus $\Phi_{\beta_n}$ is a solution to $M^{K(Q_{KdV})}_{1,1,\beta_n}$. According to Proposition \ref{Conv1}, it follows that $\Phi_{\beta_n} \rightarrow Q_{KdV}$ in $H^1(\mathbb{R})$ as $\beta_n$ tends to zero. 
Now, as \eqref{uni1} leads to
$$ I_{1,1,\beta_n}(\varphi_{\beta_n}) = \bigg ( \frac{K(\varphi_{\beta_n})}{K(Q_{KdV})} \bigg )^{\frac{2}{3}} I_{1,1,\beta_n}(\Phi_{\beta_n}) ,$$
\eqref{gssol} on the other hand shows that
$$ I_{1,1,\beta_n} (\varphi_{\beta_n}) = \frac{3}{2} \ K(\varphi_{\beta_n}) .$$
Combining the last two identities, we get the following
\begin{eqnarray}\nonumber
\displaystyle \frac{3}{2}  \frac{K({\varphi_{\beta_n}})}{K(Q_{KdV})}  =  \bigg ( \frac{K(\varphi_{\beta_n})}{K(Q_{KdV})} \bigg )^{\frac{2}{3}} \ \frac{I_{1,1,\beta_n} (\Phi_{\beta_n})} {K(Q_{KdV})} ,
\end{eqnarray}
or equivalently,
$$ \bigg ( \frac{K({\varphi_{\beta_n}})}{K(Q_{KdV})} \bigg )^{\frac{1}{3}} = \frac{2}{3} \ \frac{I_{1,1,\beta_n} (\Phi_{\beta_n})}{K(Q_{KdV})}  \ .
$$
Substituting the above identity in what follows, we obtain
\begin{equation}\label{uniq2}
\displaystyle \varphi_{\beta_n} = \bigg ( \frac{K(\varphi_{\beta_n})}{K(Q_{KdV})} \bigg)^\frac{1}{3} \Phi_{\beta_n} = \frac{2}{3} \ \frac{I_{1,1, \beta_n}(\Phi_{\beta_n})}{K(Q_{KdV})} \ \Phi_{\beta_n} \ . 
\end{equation} 
Yet, $\Phi_{\beta_n}$ converges to $Q_{KdV}$ in $H^1(\mathbb{R})$ and so $I_{1,1,\beta_n}(\Phi_{\beta_n}) \longrightarrow I_{1,1,0}(Q_{KdV})$, and $Q_{KdV}$ satisfies \eqref{gssol}, thus we reach
$$\varphi_{\beta_n} \longrightarrow Q_{KdV} \quad \text{in} \ H^1 $$
which proves \eqref{u1}. Now, we follow the argument of [\cite{Kenig Martel Robbiano 2011}, Proposition 3] to prove the uniqueness for $ |\beta|$ small enough. Let $\varphi^1_{\beta}$ and $\varphi^2_{\beta}$ be two ground state solutions to \eqref{Phimaineq2} with $\alpha=1$ and set $\tilde{w}= \varphi^1_{\beta} - \varphi^2_{\beta}$. We claim that $\tilde{w}=0$ as soon as $|\beta| $ is small enough. Thanks to \eqref{u1}, we can see that
$$ \|\tilde{w}\|_{H^1} \leq \|\varphi^1_{\beta} - Q_{KdV}\|_{H^1} + \|Q_{KdV} - \varphi^2_{\beta}\|_{H^1} \leq \varepsilon (\beta) , 
$$
with $ \varepsilon(y) \to 0 $ as $ y\to 0$. 
Moreover, $\tilde{w}$ satisfies
\begin{equation}\label{weq1}
\tilde{w} - \beta \ \mathcal{H} \partial_x \tilde{w} - \ \partial^2_x \tilde{w} = \frac{1}{2} (\varphi^1_{\beta})^2 - \frac{1}{2} (\varphi^2_{\beta})^2.
\end{equation}
We also define the operator $\Upsilon_{1,1,\beta}$ as
$$ \Upsilon_{1,1, \beta} = 1 - \beta \ \mathcal{H} \partial_x - \ \partial_x^2 - \varphi^1_{\beta} .$$
It was noticed in \cite{Kenig Martel Robbiano 2011} that the following crucial cancellation holds 
\begin{equation} \label{can}
\Upsilon_{1,1, \beta} \tilde{w}  = \frac{1}{2} (\varphi^1_{\beta})^2 - \frac{1}{2} (\varphi^2_{\beta})^2  - \varphi^1_{\beta} \tilde{w}= \frac{1}{2} \tilde{w}^2 \; .
\end{equation}
Let us now assume  that $\tilde{w} \neq 0$ and prove that it leads to a contradiction. Setting $w =\displaystyle \frac{\tilde{w}}{\|\tilde{w}\|_{H^1}}$ so that $\|w\|_{H^1}=1 $, it follows from \eqref{can} that 
\begin{equation} \label{cont1}
\|\Upsilon_{1,1, \beta} w\|_{L^2} \leq \frac{1}{\|\tilde{w}\|_{H^1}} \ \|\tilde{w}\|^2_{L^4} \leq \frac{1}{\|\tilde{w}\|_{H^1}} \ \|\tilde{w}\|^2_{H^1} = \|w\|_{H^1} \ \|\tilde{w}\|_{H^1} \leq \varepsilon(\beta). 
\end{equation} 
Hence by \eqref{cont1} and \eqref{u1}, we get
\begin{eqnarray}\nonumber
\displaystyle | \langle \Upsilon_{1,1, 0} w, w \rangle_{L^2} | &= & \bigg | \langle w - \partial_x^2 w - Q_{KdV} \ w , w \rangle_{L^2} \bigg | \nonumber \\
\displaystyle & \leq & \bigg | \langle \Upsilon_{1, 1, \beta} w, w \rangle_{L^2} + \langle \varphi^1_{\beta}\ w - Q_{KdV} \ w, w \rangle_{L^2} + \beta \langle \mathcal{H} \partial_x w, w \rangle_{L^2} \bigg | \nonumber \\
\displaystyle & \leq & \bigg | \langle \Upsilon_{1,1, \beta} w, w \rangle_{L^2} + \langle \varphi^1_{\beta} - Q_{KdV}, w^2 \rangle_{L^2} \bigg| + |\beta| \ \|D_x^{\frac{1}{2}} w\|^2_{L^2} \nonumber \\
\displaystyle & \le  & 2 \; \varepsilon(\beta) + |\beta|. \label{opcontr1}
\end{eqnarray}
Now, according to \cite{Bona et al 1987}, for any $u \in H^1(\mathbb{R})$ satisfying the orthogonality conditions $\langle u, Q_{KdV} \rangle_{L^2} = \langle u, Q'_{KdV} \rangle_{L^2} =0$, there exists $\gamma_0 >0$ such that 
\begin{equation} \label{orth}
\langle \Upsilon_{1,1, 0} u, u \rangle \geq \gamma_0 \|u\|^2_{H^1} .
\end{equation} 
Moreover, the coercivity of the operator $\Upsilon$ persists even with almost orthogonality conditions (see for instance \cite{Cote et al 2016}, Lemma 2.7). In other words, \eqref{orth} still holds with $ \gamma_0 $ replaced by $\gamma_0/2 $ for any $u \in H^1(\mathbb{R})$ satisfying
$$ |\langle u, Q_{KdV} \rangle|_{L^2} \leq \vartheta \ \|u\|_{H^1} \quad \text{and} \quad | \langle u, Q'_{KdV} \rangle|_{L^2} \leq \vartheta \ \|u\|_{H^1}  \quad \text{for \ some} \ \vartheta >0.$$
Before all else, the first orthogonality condition 
\begin{equation}\label{orth1}
\langle w, Q'_{KdV} \rangle =0
\end{equation} 
is straightforward as $w$ is even and $Q'_{KdV}$ is odd. The first almost orthogonality condition is thus a direct result.
For the second condition, we recall that differentiating \eqref{Phimaineq2c} with respect to $c$ and  taking $c=1$ leads to 
 $$\Upsilon_{1,1, 0} \ \partial_{c|{_{c = 1}}} Q_{c,KdV} = Q_{1,KdV}=Q_{KdV} \; . $$
 According to \eqref{KdV soliton2}, we have
$$
\partial_{c|{_{c = 1}}} Q_{c,KdV} =-\frac{3}{2} \tanh(x/2) \, {\rm sech}^2(x/2)=: {\mathcal V} \in H^\infty(\R) \; . 
$$
 Therefore, by \eqref{cont1} and Proposition \ref{Conv1}, we obtain
\begin{eqnarray}\nonumber
\displaystyle | \langle w, Q_{KdV|_{c=1}} \rangle_{L^2} | & = & \big | \langle w, \Upsilon_{1,1, 0} \,  {\mathcal V} \rangle_{L^2} \big | \nonumber \\
\displaystyle &=& \big | \langle \Upsilon_{1,1, 0} w,  {\mathcal V} \rangle_{L^2} \big | \nonumber \\
\displaystyle & \leq & \big | \langle \Upsilon_{1,1, \beta} w,  {\mathcal V} \rangle_{L^2} \big | + \big | \big\langle w  {\mathcal V} \ , \varphi^1_{\beta} - Q_{KdV} \big\rangle_{L^2} \big |+ \beta \ \big | \langle \mathcal{H} \partial_x w, {\mathcal V} \rangle_{L^2} \big| \nonumber \\
\displaystyle &\lesssim & \big ( \varepsilon (\beta) + \beta \big ) \|w\|_{H^1} \ . \nonumber
\end{eqnarray}
Thus, for $|\beta|$ small enough, $w$ is almost orthogonal to $Q_{KdV}$, and so we deduce that
$$ | \langle \Upsilon_{1,1, 0} w, w \rangle_{L^2} | \geq \frac{\gamma_0}{2} \|w\|^2_{H^1} = \frac{\gamma_0}{2} $$
which contradicts \eqref{opcontr1}. This  asserts that $\tilde{w}=0$ and proves the uniqueness of the ground state to \eqref{Phimaineq2}, with $ (\alpha,\beta)\in \{1\} \times ]-\tilde{\beta}_0,  \tilde{\beta}_0 [  $ for some $ \tilde{\beta}_0>0$. In the sequel, we denote this unique ground state by $ Q_{1,1,\beta} $.
\\
Now that uniqueness is established, we can repeat the argument in the beginning of the proof of this proposition to prove the continuity of the map $ \beta \mapsto Q_{1,1,\beta}$. Indeed, let $ \tilde{\beta}\in ]-\tilde{\beta}_0,\tilde{\beta}_0[ $,  let $ (\beta_n) \in ]-\tilde{\beta}_0,\tilde{\beta}_0[$ with $ \beta_n \to \tilde{\beta} $,  and let $Q_{1,1,\beta_n} $ be the unique associated even ground state to  \eqref{Phimaineq2} with 
$ (\alpha,\beta)=(1,\beta_n) $.\\
Similarly as in \eqref{uni1}, we define $\Phi_{\beta_n}$ as
\begin{equation} \label{uni2}
\Phi_{\beta_n} = \bigg ( \frac{K(Q_{1,1,\tilde{\beta}})}{K(Q_{1,1,\beta_n})} \bigg)^\frac{1}{3} Q_{1,1,\beta_n} \ ,
\end{equation}
  so that $K(\Phi_{\beta_n}) = K(Q_{1,1,\tilde{\beta}}) >0$ and  $\Phi_{\beta_n}$ is a solution to $M^{K(Q_{1,1,\tilde{\beta}})}_{1,1,\beta_n}$. As in the proof of  Proposition \ref{Conv1}, one can easily check that $(\Phi_{\beta_n})$ is a minimizing sequence of 
  $M^{K(Q_{1,1,\tilde{\beta}})}_{1,1,\tilde{\beta}}$ and  Proposition \ref{minimizer} ensures that it converges up to a subsequence  $(\Phi_{\beta_{n_k}}) $ towards a solution $ \varphi $  to $M^{K(Q_{1,1,\tilde{\beta}})}_{1,1,\tilde{\beta}}$ in $ H^1(\mathbb{R}) $.
  Now, as in \eqref{uniq2}, it holds 
\begin{equation}
\displaystyle Q_{1,1,\beta_n} = \bigg ( \frac{K(Q_{1,1,\beta_n})}{K(Q_{1,1,\tilde{\beta}})} \bigg)^\frac{1}{3} \Phi_{\beta_n} = \frac{2}{3} \ \frac{I_{1,1, \beta_n}(\Phi_{\beta_n})}{K(Q_{1,1,\tilde{\beta}})} \ \Phi_{\beta_n} \  \nonumber
\end{equation} 
and since $I_{1,1, \beta_n}(\Phi_{\beta_n})\to  M^{K(Q_{1,1,\tilde{\beta}})}_{1,1,\tilde{\beta}}=I_{1,1,\tilde{\beta}}(Q_{1,1,\tilde{\beta}})$ and 
$ Q_{1,1,\tilde{\beta}}$ satisfies \eqref{gssol}, we deduce that $ Q_{1,1,\beta_{n_k}} \to \varphi $ in $ H^1(\mathbb{R}) $. This ensures that $ \varphi $ satisfies \eqref{Phimaineq2} with 
$ (\alpha,\beta)=(1,\tilde{\beta}) $ and thus is an even ground state to this equation. Finally, by the proved uniqueness result, it follows that 
 $\varphi=Q_{1,1,\tilde{\beta}} $, and by the uniqueness of the possible limits in $H^1(\mathbb{R})$ of its subsequences, we deduce that $  Q_{1,1,\beta_n}  \rightarrow Q_{1,1,\tilde{\beta}}$  in $ H^1(\mathbb{R}) $. 
\begin{center}
\underline{\bf{Case 2}:} $\{0\le \alpha  \ll 1, \ \beta= 1 \}$:
\end{center}
This case is similar to the preceding one only up to small differences. First, we claim that $\forall \ \varepsilon>0, \ \exists \ \alpha_{\varepsilon} >0$ such that any even ground state solution $\psi_{1,\alpha,1}$ to \eqref{Phimaineq2} with $\beta =1$ satisfies
\begin{equation}\label{u2}
\| \psi_{1,\alpha,1} - Q_{BO}\|_{H^1} <\frac{\varepsilon}{2} \quad \quad \forall \ 0 <\alpha < \alpha_\varepsilon .
\end{equation}
The proof of \eqref{u2} follows exactly the same lines as the proof of \eqref{u1}, but this time making use of Proposition \ref{pconv2} instead of Proposition \ref{Conv1}. It will thus be omitted. 

 Let us now tackle the proof of  the uniqueness result. Let $\varphi^1_{\alpha}$ and $\varphi^2_{\alpha}$ be two ground state solutions to \eqref{Phimaineq2} with $\beta=1$. Set $\tilde{w}= \varphi^1_{\alpha} - \varphi^2_{\alpha}$; we will seek a contradiction argument to prove that $\tilde{w}=0$ for $\alpha$ small enough. Thanks to \eqref{u2}, we can see that
$$ \|\tilde{w}\|_{H^1} \leq \|\varphi^1_{\alpha} - Q_{BO}\|_{H^1} + \|Q_{BO} - \varphi^2_{\alpha}\|_{H^1} \leq \varepsilon (\alpha) \; ,$$
with $ \varepsilon(y)\to 0 $ as $ y\to 0 $.
Moreover, $\tilde{w}$ satisfies
\begin{equation}\label{weq}
\tilde{w} - \mathcal{H} \partial_x \tilde{w} -\alpha \ \partial^2_x \tilde{w} = \frac{1}{2} \ (\varphi^1_{\alpha})^2 - \frac{1}{2} \ (\varphi^2_{\alpha})^2 .
\end{equation}
We also define the operator $\Gamma$ to be
$$ \Gamma_{1,\alpha,1} = 1 - \mathcal{H} \partial_x - \alpha \ \partial_x^2 - \varphi^1_{\alpha} $$
and check that 
\begin{equation}\label{weq2}
 \Gamma_{1,\alpha,1} \tilde{w} =  \frac{1}{2} \ (\varphi^1_{\alpha})^2 - \frac{1}{2} \ (\varphi^2_{\alpha})^2 - (\varphi^1_{\alpha}) \tilde{w} = \frac{1}{2} \tilde{w}^2 .
 \end{equation}
Let us now assume  that $\tilde{w} \neq 0$ and set $w =\displaystyle \frac{\tilde{w}}{\|\tilde{w}\|_{H^\frac{1}{2}}}$ so that 
$\|w\|_{H^\frac{1}{2}}=1$. Thanks to \eqref{weq2} together with the embedding $ H^\frac{1}{2}(\mathbb{R})  \hookrightarrow L^4(\mathbb{R})$, we get
\begin{equation} \label{cont2}
\|\Gamma_{1,\alpha,1}w\|_{L^2}  \leq \frac{1}{\|\tilde{w}\|_{H^\frac{1}{2}}} \ \|\tilde{w}\|^2_{L^4} \leq \|w\|_{H^\frac{1}{2}} \ \|\tilde{w}\|_{H^\frac{1}{2}} \leq \varepsilon(\alpha).
\end{equation}
Hence by \eqref{cont2} and \eqref{u2}, we obtain
\begin{eqnarray}\nonumber
\displaystyle  \langle \Gamma_{1,0,1} w, w \rangle_{L^2}  &= & \langle w - \mathcal{H} \partial_x w - (Q_{BO}) w , w \rangle_{L^2}  \nonumber \\
\displaystyle & = &  \langle \Gamma_{1, \alpha, 1} w, w \rangle_{L^2} + \langle (\varphi^1_{\alpha}) w - (Q_{BO}) w, w \rangle_{L^2} + \alpha \langle \partial_x^2 w, w \rangle_{L^2}  \nonumber \\
\displaystyle & = &   \langle \Gamma_{1, \alpha, 1} w, w \rangle_{L^2} + \langle \varphi^1_{\alpha} - Q_{BO}, w^2 \rangle_{L^2} - \alpha \ \|\partial_x w\|_{L^2} \nonumber \\
\displaystyle & \le & \varepsilon(\alpha)+ \|\varphi^1_{\alpha} - Q_{BO}\|_{H^1} \|w\|_{L^2}^2 \nonumber \\
&\le  & 2 \; \varepsilon(\alpha) . \label{opcontr2} 
\end{eqnarray}
Note that we have used in a crucial way the fact that $ \alpha\ge 0 $ since otherwise we do not have any control on $ \|\partial_x w\|_{L^2}$. Now according to \cite{Bennett et al. 1983} or \cite{Weinstein 1987}, there exists $\eta_0 >0$ such that for any $u \in H^1(\mathbb{R})$ satisfying the orthogonality conditions $\langle u, Q_{BO} \rangle_{L^2} = \langle u, Q'_{BO} \rangle_{L^2} =0$, we have
\begin{equation} \label{ortho}
 \langle \Gamma_{1,0, 1} u, u \rangle \geq \eta_0 \|u\|^2_{H^\frac{1}{2}} 
\end{equation}
and again thanks to \cite{Cote et al 2016}, \eqref{ortho} still holds for $u \in H^\frac{1}{2}$ satisfying almost orthogonality conditions. To start with, the first orthogonality condition 
\begin{equation}\label{orth2}
\langle w, Q^{'}_{BO} \rangle =0
\end{equation} 
is direct (and thus the first almost orthogonality condition too) since $w$ is even and $Q^{'}_{BO}$ is odd. For the second condition, we proceed as above by noticing that $\Gamma_{1,0,1} {\mathcal W}=Q_{BO} $ where according to \eqref{Phimaineq2c} and \eqref{BO soliton2}, 
$$
 {\mathcal W}=\partial_{c|{_{c = 1}}} Q_{c,BO}=-\frac{8x}{(1+x^2)^2} \; .
$$
Therefore, \eqref{cont2} and Proposition \ref{pconv2} lead to 
\begin{eqnarray}\nonumber
\displaystyle | \langle w, Q_{BO} \rangle_{L^2} | & = & \big | \langle w, \Gamma_{1,0, 1} {\mathcal W}\rangle_{L^2} \big | \nonumber \\
\displaystyle &=& \big | \langle \Gamma_{1,0,1} w, {\mathcal W} \rangle_{L^2} \big | \nonumber \\
\displaystyle & \leq & \big | \langle \Gamma_{1,\alpha,1} w,{\mathcal W} \rangle_{L^2} \big | + \big | \langle w \, {\mathcal W} \ , \varphi^1_{\alpha} - Q_{BO}  \rangle_{L^2} \big | + \alpha \ \big | \langle w, \partial_x^2{\mathcal W} \rangle_{L^2} \big| \nonumber \\
\displaystyle &\lesssim &   \Bigl( \varepsilon (\alpha) + \alpha\Bigr) \|w\|_{H^\frac{1}{2}} \; . \nonumber
\end{eqnarray}
Thus, for $\alpha$ small enough, $w$ is almost orthogonal to $Q_{BO}$, and so we conclude that 
$$  \langle \Gamma_{1,0,1} w, w \rangle_{L^2} \geq \frac{\eta_0}{2} \|w\|^2_{H^\frac{1}{2}} = \frac{\eta_0}{2}  ,$$
which contradicts \eqref{opcontr2}. This hence asserts that $\tilde{w}=0$ and uniqueness is proved. In the sequel, we let $ Q_{1,\alpha,1} $ denote this unique even ground state.
\\
Finally, the continuity of $ \alpha\mapsto Q_{1,\alpha,1} $ follows exactly the same lines as in {\bf{Case 1}} and will thus be disregarded. 
\end{proof}
\subsubsection{Orbital stability}
To prove the orbital stability of an even ground state solution in $H^1(\mathbb{R})$, we recall the following proposition; check \cite{Bouard 2005}. 
\begin{proposition}\label{orbital}
Set the operator $\Upsilon_{c,\alpha, \beta}$ associated to the second derivative of the action functional $E+cV$ and linearized at $\varphi_{c,\alpha, \beta} \in H^1(\mathbb{R})$ solution to \eqref{Phimaineq2}; where $\Upsilon_{c,\alpha, \beta}$ is defined by
$$\Upsilon_{c,\alpha, \beta} := c - \alpha \partial_x^2 - \beta \mathcal{H} \partial_x - \varphi_{c,\alpha,\beta} .$$ 
Assume that there exists $\gamma >0$ such that
$$\langle \Upsilon_{c,\alpha,\beta} u, u \rangle _{L^2} \geq \gamma \|u\|^2_{H^1}$$
for any $u \in H^1(\mathbb{R})$ satisfying the orthogonality conditions
$$ \langle u, \varphi_{c,\alpha,\beta} \rangle_{L^2} = \langle u, \varphi'_{c,\alpha,\beta} \rangle = 0$$
then $\varphi_{c,\alpha,\beta}$ is orbitally stable in $H^1(\mathbb{R})$. More precisely, there exists $ K>0  $ and $ \varepsilon_0>0 $ such that if $ u_0\in H^1(\mathbb{R})$ satisfies
$$
\|u_0- \varphi_{c,\alpha,\beta}\|_{H^1} <\varepsilon 
$$
for some $ 0<\varepsilon<\varepsilon_0 $ then 
$$
\sup_{t\in\R} \inf_{y\in\R} \| u(t)-\varphi_{c,\alpha,\beta}(\cdot-y)\|_{H^1} < K \varepsilon \; .
$$

\end{proposition}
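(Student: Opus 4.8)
The plan is to run the standard Lyapunov--modulation argument, using as Lyapunov functional the conserved action $S(u):=E(u)+cM(u)$, where $M(u)=\frac12\int_\R u^2$ is the mass (this is the functional denoted $E+cV$ in the statement). Since \eqref{Phimaineq2c} is exactly the Euler--Lagrange equation $S'(\varphi)=0$ for $\varphi:=\varphi_{c,\alpha,\beta}$, the hypothesis says precisely that the Hessian $S''(\varphi)=\Upsilon_{c,\alpha,\beta}$ is coercive on $\{\varphi,\varphi'\}^{\perp}$. As \eqref{maineq2} is globally well-posed in $H^1(\R)$ with $E$ and $M$ conserved, both $S(u(t))-S(\varphi)$ and $M(u(t))-M(\varphi)$ are independent of $t$; moreover, because $S'(\varphi)=0$, one has $S(u_0)-S(\varphi)=O(\|u_0-\varphi\|_{H^1}^2)$. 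The whole argument consists in turning the coercivity into a lower bound for $S(u)-S(\varphi)$ in terms of the distance to the orbit, and then inserting these conservation facts.

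Introduce $d(u):=\inf_{y\in\R}\|u(\cdot+y)-\varphi\|_{H^1}$. First I would prove a modulation lemma: there is $\delta_0>0$ such that, whenever $d(u)<\delta_0$, the implicit function theorem applied to $y\mapsto\langle u(\cdot+y)-\varphi,\varphi'\rangle_{L^2}$ (whose $y$-derivative at the base point is close to $\|\varphi'\|_{L^2}^2\neq0$) produces a translation $y(u)$ for which $v:=u(\cdot+y(u))-\varphi$ satisfies the exact orthogonality $\langle v,\varphi'\rangle_{L^2}=0$, with $\|v\|_{H^1}\simeq d(u)$. Using the translation invariance of $S$ and $S'(\varphi)=0$, a Taylor expansion gives
\[
S(u)-S(\varphi)=\tfrac12\langle\Upsilon_{c,\alpha,\beta}v,v\rangle_{L^2}+R(v),\qquad |R(v)|\le C\|v\|_{H^1}^3,
\]
the cubic remainder coming solely from the term $-\frac16\int\varphi^3$ and estimated via the Sobolev embedding $H^1\hookrightarrow L^3$.

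The missing orthogonality must be recovered from the mass. Since $\langle\varphi,\varphi'\rangle_{L^2}=0$ ($\varphi$ even, $\varphi'$ odd), the projection $\tilde v:=v-\|\varphi\|_{L^2}^{-2}\langle\varphi,v\rangle_{L^2}\,\varphi$ satisfies both $\langle\tilde v,\varphi\rangle_{L^2}=0$ and $\langle\tilde v,\varphi'\rangle_{L^2}=0$, so the coercivity hypothesis yields $\langle\Upsilon_{c,\alpha,\beta}\tilde v,\tilde v\rangle\ge\gamma\|\tilde v\|_{H^1}^2$; expanding $\langle\Upsilon_{c,\alpha,\beta}v,v\rangle$ around $\tilde v$ costs only $O(|\langle\varphi,v\rangle|\,\|v\|_{H^1}+\langle\varphi,v\rangle^2)$. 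From $M(\varphi+v)=M(\varphi)+\langle\varphi,v\rangle_{L^2}+\frac12\|v\|_{L^2}^2$ and mass conservation, $|\langle\varphi,v\rangle_{L^2}|\le|M(u_0)-M(\varphi)|+\frac12\|v\|_{L^2}^2$. Combining everything and absorbing cubic terms, I expect
\[
S(u(t))-S(\varphi)\ \ge\ \tfrac{\gamma}{4}\,\|v\|_{H^1}^2-C\,|M(u_0)-M(\varphi)|
\]
for $d(u(t))$ small. (One may instead invoke the almost-orthogonality coercivity of \cite{Cote et al 2016}, already used for \eqref{orth}--\eqref{ortho}.)

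Finally I would close by a continuity argument in time: letting $T^*$ be the maximal time on which $d(u(t))<\delta_0$, the displayed estimate together with $S(u(t))-S(\varphi)=O(\varepsilon^2)$ forces $d(u(t))^2\lesssim\varepsilon^2+|M(u_0)-M(\varphi)|$ on $[0,T^*)$, and since $t\mapsto d(u(t))$ is continuous and starts small, $T^*=+\infty$; this already yields the qualitative orbital stability of the Definition. The delicate point --- and the one needed for the sharp linear bound $K\varepsilon$ --- is that $M(u_0)-M(\varphi)$ is only $O(\varepsilon)$ (it carries a linear term $\langle\varphi,u_0-\varphi\rangle$), which would merely give $d\lesssim\sqrt\varepsilon$. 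To upgrade to $d\lesssim\varepsilon$, I would compare $u(t)$ not with $\varphi=\varphi_{c,\alpha,\beta}$ but with the nearby profile $\varphi_{c_0,\alpha,\beta}$ whose speed $c_0$ is chosen so that $M(\varphi_{c_0,\alpha,\beta})=M(u_0)$; this is solvable with $|c_0-c|=O(\varepsilon)$ as soon as $\tfrac{d}{dc}M(\varphi_{c,\alpha,\beta})\neq0$, it makes the mass mismatch vanish exactly (rendering the estimate purely quadratic, hence $d\lesssim\varepsilon$), and the coercivity persists at $\varphi_{c_0,\alpha,\beta}$ by continuity in $c$. This speed-modulation step, namely the nondegeneracy $\partial_c M(\varphi_{c,\alpha,\beta})\neq0$ and the persistence of coercivity, is where the main work lies.
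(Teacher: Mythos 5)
The paper does not actually prove this proposition: it is quoted as a known criterion with a pointer to \cite{Bouard 2005}, so there is no in-paper argument to compare against. Your Lyapunov--modulation scheme is the standard proof of exactly this statement, and its core steps are sound: conservation of $S=E+cM$ along the flow, the implicit-function-theorem modulation producing $v=u(\cdot+y(u))-\varphi$ with $\langle v,\varphi'\rangle_{L^2}=0$ and $\|v\|_{H^1}\simeq d(u)$, the Taylor expansion $S(u)-S(\varphi)=\frac12\langle\Upsilon_{c,\alpha,\beta}v,v\rangle+O(\|v\|_{H^1}^3)$ via $H^1\hookrightarrow L^3$, the recovery of the second orthogonality from mass conservation, and the continuity-in-time argument. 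This already yields the qualitative stability of the Definition.

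The one step that does not hold up as written is the passage to the sharp bound $K\varepsilon$. Your displayed inequality $S(u(t))-S(\varphi)\ge\frac{\gamma}{4}\|v\|_{H^1}^2-C\,|M(u_0)-M(\varphi)|$ is needlessly lossy: the mass mismatch enters only through the cross term $O\bigl(|\langle\varphi,v\rangle|\,\|v\|_{H^1}\bigr)$ with $|\langle\varphi,v\rangle|\le m+\frac12\|v\|_{L^2}^2$, where $m:=|M(u_0)-M(\varphi)|$, and Young's inequality absorbs $m\|v\|_{H^1}\le\frac{\gamma}{8}\|v\|_{H^1}^2+\frac{2}{\gamma}m^2$ instead of costing an additive $Cm$. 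Since $m\le\|\varphi\|_{L^2}\,\varepsilon+\frac12\varepsilon^2\lesssim\varepsilon$, one gets $\frac{\gamma}{8}\|v\|_{H^1}^2\lesssim S(u_0)-S(\varphi)+\varepsilon^2+\|v\|_{H^1}^3\lesssim\varepsilon^2+\|v\|_{H^1}^3$, hence $d(u(t))\le K\varepsilon$ directly, with no need to match masses. This matters because the speed-modulation step you propose instead rests on the nondegeneracy $\partial_cM(\varphi_{c,\alpha,\beta})\neq0$, on differentiability of $c\mapsto\varphi_{c,\alpha,\beta}$, and on persistence of coercivity at the shifted profile $\varphi_{c_0,\alpha,\beta}$ --- none of which is among the hypotheses of the proposition, which assumes coercivity of $\Upsilon_{c,\alpha,\beta}$ only at the given profile. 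As written, your proof of the quantitative conclusion therefore imports unavailable assumptions; with the Young-inequality bookkeeping above it closes under the stated hypotheses alone.
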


We are now in position to prove the orbital stability of the even ground state solutions $\varphi_{1,1,\beta}$ of \eqref{Phimaineq2} with $\alpha=1$, then that of $\varphi_{1,\alpha, 1}$ for $\beta=1$ in \eqref{Phimaineq2}.
\begin{center}
\underline{\bf{Case 1}:} Orbital stability of $\varphi_{1,1,\beta}$
\end{center}
To begin with, we note that thanks to Proposition \ref{Conv1}, any $u\in H^1$ satisfying $\langle u, Q_{1,1,\beta} \rangle_{L^2}= \langle u, Q'_{1,1,\beta} \rangle_{L^2} =0$ is almost orthogonal in $L^2$ to $Q_{KdV}$ and $Q'_{KdV}$ for $|\beta|$ small enough and thus \eqref{orth} holds with $ \gamma_0 $ replaced by $ \gamma_0/2$. Hence,
\begin{eqnarray}
\displaystyle \langle \Upsilon_{1,1, \beta} u, u \rangle_{L^2} &=& \langle \Upsilon_{1,1,0} u, u \rangle_{L^2} - \langle Q_{1,1,\beta} - Q_{KdV} , u^2 \rangle_{L^2} - \beta \langle \mathcal{H} \partial_x u, u \rangle_{L^2} \nonumber \\
\displaystyle & \geq & \frac{\gamma_0}{2} \|u\|^2_{H^1} - \varepsilon(\beta) \|u\|^2_{H^1} - |\beta| \ \|D_x^\frac{1}{2} u\|^2_{L^2} \nonumber\\
\displaystyle &\geq & \bigg( \frac{\gamma_0}{2} - \varepsilon(\beta) - \varepsilon (|\beta|) \bigg) \|u\|^2_{H^1} \label{coercivity}
\end{eqnarray}
where $\varepsilon(\beta), \  \varepsilon (|\beta|) \longrightarrow 0$ as $\beta \rightarrow 0$. Eventually, thanks to Proposition \ref{orbital}, we deduce that there exists $r_0>0$ such that $\varphi_{1,1,\beta}$ is orbitally stable in $H^1(\mathbb{R})$ for any $|\beta|< r_0$.
\\
Lastly, since $u$ is a solution of \eqref{maineq2}  with $ \alpha=1 $ if and only if $v(t,x) = \lambda^{2} \ u(\lambda^{3}\ t, \lambda \ x)$ is a solution to \eqref{maineq} with $ \gamma=|\lambda| \beta $, there is  an equivalence between the uniqueness and orbital stability of $Q_{1,1,\beta}$ and the ones of $Q_{\lambda^2,1,|\lambda|\beta}$. This leads to the  uniqueness and orbital stability of $ Q_{c,1,\gamma} $ with $ \gamma \in \mathbb{R}^* $ as soon as 
$ c>r_0^{-2} \gamma^2 $.
\begin{center}
\underline{\bf{Case 2}:} Orbital stability of $\varphi_{1,\alpha,1}$
\end{center}
Again, we first note that thanks to Proposition \ref{pconv2}, any $u\in H^\frac{1}{2}(\mathbb{R})$ satisfying $\langle u, Q_{1,\alpha,1} \rangle= \langle u, Q'_{1,\alpha,1} \rangle =0$ is almost orthogonal in $L^2$ to $Q_{BO}$ and $Q'_{BO}$ for $\alpha>0$ small enough and thus \eqref{ortho} holds with $ \eta_0 $ replaced by $ \eta_0/2$.
Hence
\begin{eqnarray}\nonumber
\displaystyle \langle \Upsilon_{1,\alpha,1} u, u \rangle_{L^2} &=& \langle \Upsilon_{1,0,1} u, u \rangle_{L^2} - \langle Q_{1,\alpha,1} - Q_{BO} , u^2 \rangle_{L^2} + \alpha \|\partial_x u\|^2_{L^2} \nonumber \\
\displaystyle & \geq & \frac{\eta_0}{2} \|u\|^2_{H^\frac{1}{2}} - \varepsilon(\alpha) \|u\|^2_{L^2} + \alpha \|\partial_x u\|^2_{L^2} \nonumber\\
\displaystyle &\geq & \min \bigg \{ \frac{\eta_0}{2} - \varepsilon(\alpha), \alpha \bigg\} \|u\|^2_{H^1} \nonumber
\end{eqnarray}
where $\varepsilon(\alpha) \longrightarrow 0$ as $\alpha \rightarrow 0$. Eventually, thanks to Proposition \ref{orbital}, we deduce that there exists $\tilde{r}_0>0$ such that $Q_{1,\alpha,1}$ is orbitally stable in $H^1 (\mathbb{R})$ for any $\alpha \in ]0, \tilde{r}_0[$.
\\
Now, we return to the uniqueness and orbital stability of solution to \eqref{maineq}. Let us fix $ \gamma>0 $. Since $u$ is a solution to \eqref{maineq2} with $\beta=1 $ if and only if $v(t,x)= \gamma^2 \alpha u (\gamma^3 \alpha^2  t, \gamma \alpha  x)$ satisfies \eqref{maineq}, then the uniqueness and orbital stability of $Q_{1,\alpha,1}$ is equivalent to the uniqueness and orbital stability of $Q_{\gamma^2 \alpha, 1,\gamma}$. This leads to the uniqueness and orbital stability of $ Q_{c,1,\gamma}$ with $ \gamma>0 $ as soon as
 $ 0<c<\tilde{r}_0 \gamma^2 $, and eventually completes the proof of Theorem \ref{Thm1}.
 \section{Decay of the ground state solutions} \label{section 4}
In this last section, we prove some decay estimates for any even $H^1$-solution to \eqref{Phimaineq} as well as its two first derivatives. We follow the arguments in \cite{bonali}. Note that such decay  estimates were already proved in \cite{bonachen}, except for the derivatives.  It is also worth noting that these  decay estimates are an important requirement to derive sharp properties of the flow of the equation in the neighborhood of the solitary waves.
\begin{proposition}\label{prodecay}
Let $\gamma\in\R $. Given that $ c>0 $ in the case $ \gamma>0  $ and $ c>\gamma^2/4 $ in the case $ \gamma<0 $, the even  solutions to \eqref{Phimaineq} satisfy 
$$
 |x^2\,  \varphi_{c,1,\gamma}|+  |x^3 \, \varphi_{c,1,\gamma}' (x)|+ |x^3 \, \varphi_{c,1,\gamma}''|\in L^\infty(\mathbb{R})\;.
 $$
\end{proposition}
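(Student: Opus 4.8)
The plan is to recast \eqref{Phimaineq} as a convolution identity and to read off the decay of the solitary wave from that of the Green kernel of the linear part. Write $L_{c,\gamma}=c-\partial_x^2+\gamma\mathcal H\partial_x$, with Fourier symbol $m(\xi)=c+\xi^2-\gamma|\xi|$. Under the hypotheses the argument of Lemma \ref{coercive} (after the dilation reducing to $c=1$) gives $m(\xi)\ge\delta(1+\xi^2)$ for some $\delta>0$; in particular $L_{c,\gamma}$ is invertible and the bootstrap already used in Section \ref{section 2} applies verbatim to any even $H^1$-solution, so $\varphi:=\varphi_{c,1,\gamma}\in H^\infty(\R)$. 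Hence $\varphi,\varphi',\varphi''$ are bounded, continuous, tend to $0$ at infinity, and $\varphi^2,\varphi\varphi'\in L^1(\R)\cap L^\infty(\R)$. Setting $G=\mathcal F^{-1}(1/m)$, equation \eqref{Phimaineq} reads
\[
\varphi=\tfrac12\,G*\varphi^2 .
\]

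First I would establish the decay of the kernel. The only obstruction to smoothness of $1/m$ is the corner of $|\xi|$ at $\xi=0$; away from the origin $1/m$ is $C^\infty$ and $O(\xi^{-2})$, so after a smooth cutoff the high-frequency part of $G$ and of all its derivatives is rapidly decreasing. Near $\xi=0$ one expands $1/m(\xi)=\tfrac1c+\tfrac{\gamma}{c^2}|\xi|+O(\xi^2)$, and the successive non-smooth terms dictate the algebraic tails through $\mathcal F^{-1}|\xi|\sim|x|^{-2}$, $\mathcal F^{-1}(\xi|\xi|)\sim|x|^{-3}$ and $\mathcal F^{-1}|\xi|^{3}\sim|x|^{-4}$. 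Carrying this out for $1/m$ and $i\xi/m$ I expect
\[
|G(x)|\lesssim(1+|x|)^{-2},\qquad |G'(x)|\lesssim(1+|x|)^{-3},
\]
while, writing $L_{c,\gamma}G=\delta_0$ as $G''=cG+\gamma\mathcal H G'-\delta_0$, I would split $G''=-\delta_0+R$, where $\widehat R=-\xi^2/m+1=(c-\gamma|\xi|)/m$. The structural point is that the two slowly decaying $|x|^{-2}$ contributions of $cG$ and $\gamma\mathcal H G'$ cancel; this cancellation is automatic at the symbol level, since the leading non-smooth part of $-\xi^2/m$ at $\xi=0$ is of order $|\xi|^3$. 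Thus $R$ decays like $(1+|x|)^{-3}$ for $|x|$ large, and is locally integrable at the origin (a logarithmic singularity coming from the $O(1/|\xi|)$ high-frequency behaviour of $\widehat R$).

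Next I would transfer these rates to $\varphi$ by a bootstrap. Because the nonlinearity is quadratic and $G$ decays like $(1+|x|)^{-2}$, splitting $\varphi=\tfrac12 G*\varphi^2$ into $\{|y|\le|x|/2\}$ and $\{|y|>|x|/2\}$ yields, with $\omega(R):=\sup_{|y|\ge R}|\varphi(y)|$, an inequality of the form $\omega(R)\le A(1+R)^{-2}+B\,\omega(R/2)^2$. Since $\varphi\to0$, the quadratic term is a contraction for $R$ large, and iterating gives $|\varphi(x)|\lesssim(1+|x|)^{-2}$, that is $x^2\varphi\in L^\infty(\R)$. With this in hand $\varphi^2\lesssim(1+|x|)^{-4}$, and differentiating the convolution identity gives $\varphi'=\tfrac12 G'*\varphi^2$ and $\varphi''=\tfrac12 G''*\varphi^2=-\tfrac12\varphi^2+\tfrac12 R*\varphi^2$. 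A standard convolution estimate (the tail of $K*f$ being governed by the smaller of the two decay exponents, both exceeding $1$) then yields $|\varphi'(x)|\lesssim(1+|x|)^{-3}$ and $|\varphi''(x)|\lesssim(1+|x|)^{-3}$, which is the assertion.

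The main difficulty is the kernel step: extracting the sharp algebraic decay of $G,G',G''$ from the $|\xi|$-singularity of $1/m$ at the origin, controlling the local (integrable) singularity of $R=G''+\delta_0$ at $x=0$, and making rigorous the cancellation of the $|x|^{-2}$ terms responsible for the $|x|^{-3}$ decay of $\varphi''$. Once the kernel bounds are in place, the bootstrap and the convolution estimates are routine; the evenness of $\varphi$ is available throughout and may be used to simplify the low-frequency expansion, but is not essential to the argument.
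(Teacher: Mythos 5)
Your proposal is correct and follows the same overall architecture as the paper's proof: rewrite \eqref{Phimaineq} as a convolution $\varphi=G*\varphi^2$ (the paper absorbs the factor $\tfrac12$ into $G$), establish algebraic decay of the kernel and of its derivative, and transfer it to $\varphi$ by iterating the convolution identity. The differences are in the three technical steps. (i) For the kernel, you extract the $|x|^{-2}$ and $|x|^{-3}$ tails of $G$ and $G'$ from the $|\xi|$-corner of $1/m$ at the origin via a low/high-frequency splitting and a symbol expansion, whereas the paper deforms the half-line integral $\int_0^\infty e^{i|x|\xi}/m(\xi)\,d\xi$ into the complex plane (Cauchy's theorem on a rectangle), reading the $|\gamma|\,|x|^{-2}$ tail off the segment on the imaginary axis and getting exponential decay from the horizontal segment; both are standard and give the same rates. (ii) For the seed decay, the paper invokes Bona--Li (Theorem 3.1.2 of \cite{bonali}) to obtain $|x|^{l}\varphi\in L^\infty$ for some $l>0$ and then iterates linearly, while your self-contained nonlinear bootstrap $\omega(R)\le A(1+R)^{-2}+B\,\omega(R/2)^2$ replaces that citation; this works, but note that after linearizing you must arrange the contraction factor to be smaller than $2^{-2}$ (available since $\omega\to0$) for the dyadic iteration to close at the rate $R^{-2}$ rather than at the homogeneous rate $R^{-1}$ --- equivalently, run the bootstrap twice, first to $R^{-1}$ and then, reinjecting, to $R^{-2}$. (iii) For $\varphi''$ the paper sidesteps $G''$ entirely by writing $\varphi''=2\,G'*(\varphi\varphi')$, while you decompose $G''=-\delta_0+R$ with $\widehat{R}=(c-\gamma|\xi|)/m$; your cancellation observation (leading non-smooth term of $-\xi^2/m$ at the origin is $O(|\xi|^3)$, so $R$ is integrable with only a logarithmic singularity at $x=0$ and in fact an $O(|x|^{-4})$ tail) is correct, but the paper's route is lighter since it never has to handle distributions or the local singularity of $G''$. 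With these points attended to, your convolution estimates close and yield the stated decay.
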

\begin{proof}
We consider only the case $ c=1 $ and $ \gamma >-2 $. Indeed, as already mentioned in the beginning of Section \ref{section 2},  $ \varphi $ is a solution to \eqref{Phimaineq} with $ c=1 $ and $ \gamma=\beta$ if and only if $ \phi(\cdot)= \lambda^2 \varphi (\lambda\cdot) $ is a solution to \eqref{Phimaineq} with $ c=\lambda^2 $ and $ \gamma = |\lambda| \beta $. Therefore, the decay estimate  for $ c=1 $ and $\gamma >-2 $ ensures the same decay estimate for any speed $ c>0 $ in the case $ \gamma>0 $ and for any speed $c>\gamma^2/4 $ in the case $ \gamma<0 $.
Now, let us denote by $ \varphi_{\gamma} $ an even $H^1$-solution to   \eqref{Phimaineq} with $ c=1 $ and $ \gamma >-2 $. We know that $ \varphi_\gamma\in H^\infty(\mathbb{R}) $ and according to \eqref{Phimaineq} it holds 
\begin{equation} \label{four2}
\widehat{\varphi_{\gamma}} (\xi) =  \frac{1}{2( 1+  \xi^2 + \gamma |\xi|)} \ \widehat{\varphi_{\gamma} ^2} (\xi) .
\end{equation}
Therefore, for $ k=0,1$, $\varphi_{\gamma}^{(k)} (x) = G^{(k)} \ast \varphi_{\gamma}^2 (x) $ with $G^{(k)}$ given by
$$ G^{(k)}(x) = \frac{1}{2\sqrt{2 \pi}} \int_{\mathbb{R}} \frac{ (i \xi)^{k}  \ e ^{i x \xi}}{1+ \alpha \xi^2 + \gamma |\xi|} d\xi \ .$$
Since $\displaystyle \frac{1}{2( 1+  \xi^2 + \gamma |\xi|)}$ belongs to $ H^1(\R)$, the hypotheses of (cf. \cite{bonali}, Theorem 3.1.2, Page 384) are clearly fulfilled. Thus according to this theorem, there exists $ 0< l \leq 1 $ such that $ |x|^l \varphi_\gamma \in L^\infty(\mathbb{R})$. Since  $ \varphi_\gamma $ belongs to $L^2(\mathbb{R}) $, it follows that $ \varphi_\gamma^2 $ belongs to $L^1(\mathbb{R}) $ and decays at least as $ |x|^{-2l}$. We are going to prove that $ G $ decays at least as $ |x|^{-2} $ and thus also belongs to $L^1(\mathbb{R}) $.  Assuming this decay for a while, the identity $\varphi_{\gamma}  = G \ast \varphi_{\gamma}^2  $  ensures that $ \varphi_\gamma $ decays at least as $ |x|^{-2l}$; reiterating this argument for a finite number of times, we obtain that  $ \varphi_\gamma $ decays at least as $|x|^{-2} $. 
Therefore, it remains to prove the decay of $ G$. We start by noticing  that since $\displaystyle \frac{1}{1+ \alpha \xi^2 + \beta |\xi|}$ is a real and even function, then so is its inverse Fourier. Hence, 
$$
\displaystyle G(x) = \frac{1}{2\sqrt{2 \pi}} \int_{\mathbb{R}} \frac{\cos(x \xi)}{1+ \alpha \xi^2 + \gamma |\xi|} \ d\xi = \frac{1}{\sqrt{2 \pi}} \int_{\mathbb{R}^+} \frac{\cos (x \xi)}{1+ \alpha \xi^2 + \gamma \xi} \ d\xi .\nonumber
$$
Fix $x$, and set $g(z) = g(x,z) =\displaystyle \frac{e^{i |x| z}}{1+ z^2 + \gamma z} $. We notice that the roots of $p(z)=1+ z^2 + \gamma z $ are given when $\gamma \ge 2 $ by $ \frac{-\gamma\mp \sqrt{\gamma^2-4}}{2} $ and when $ |\gamma|< 2 $ by $ \frac{-\gamma\mp i \sqrt{4-\gamma^2}}{2} $. Therefore, $ g $ is analytic on the open convex set 
$$
\Lambda_{\theta}:=\{ z\in {\mathbb C} \, /\, \Re(z)>\theta_1 \text{ and } |\Im z |<\theta_2 \} 
$$
with $ (\theta_1,\theta_2)=(\frac{-\gamma+\sqrt{\gamma^2-4}}{2},1) $ when $ \gamma\ge 2 $,   $ (\theta_1,\theta_2)=(-\gamma/2, 1) $ when $ 0<\gamma< 2 $ and $ (\theta_1,\theta_2)= (-1,\frac{\sqrt{4-\gamma^2}}{2}) $ when $ -2<\gamma\le 0 $.
We can thus use Cauchy's Integral Theorem along the closed path $\delta_R := \delta_{1,R} + \delta_{2,R} + \delta_{3,R} + \delta_{4,R}$ , where $\delta_{1,R}= [0,R]$, $\delta_{2,R} = \{R+i \ t; \ t \in [0,\frac{\theta}{2}]$ \}, $\delta_{3,R} = \{ t+i \frac{\theta}{2} ; \ t \in [R, 0]\}$, and $\delta_{4,R}= \{ i \ t; \ t \in [\frac{\theta}{2}, 0] \}$, to get
\begin{eqnarray}\nonumber
\displaystyle \oint_{\delta_R} g(z) \ dz &=& \int_{\delta_{1,R}} g(z) \ dz + \int_{\delta_{2,R}} g(z) \ dz + \int_{\delta_{3,R}} g(z) \ dz + \int_{\delta_{4,R}} g(z) \ dz \nonumber \\
\displaystyle &:=& I_{1,R} + I_{2,R} + I_{3,R} + I_{4,R} \nonumber \\
\displaystyle &=& 0. \nonumber
\end{eqnarray}
Since $\displaystyle \frac{\cos (x \xi)}{1+ \alpha \xi^2 + \gamma \xi}$ is integrable on $ \mathbb{R}^*_+$, we have  
$$\sqrt{2\pi} \,  G(x) = \lim_{R \rightarrow +\infty} \Re(I_{1,R})= \lim_{R \rightarrow +\infty} 
\Re\Bigl( - I_{2,R} - I_{3,R} - I_{4,R}\Bigr) .$$
We recall that for a piecewise $C^1$-path 
$$\gamma : [t_1,t_2] \ni t \mapsto \gamma(t) \in \mathbb{C} , $$ 
we have
 $$ \int _\gamma g(z) \ dz = \int_{t_1}^{t_2} g(\gamma(t)) \ \gamma'(t) \ dt .$$
Starting with $I_2$,
\begin{eqnarray}\nonumber
\displaystyle I_{2,R}  &=& i \int_0^{\frac{\theta}{2}} \frac{e^{i |x| (R + it)}}{1 + (R +it)^2 +\gamma (R+it)} \ dt = i e^{-|x|t} \int_0^{\frac{\theta}{2}} \frac{e^{i |x| R }}{1 + (R +it)^2 +\gamma (R+it)} \ dt \nonumber.
\end{eqnarray}
Using the fact that $|e^{i |x| R}| \leq 1$, integrating over $[0,\frac{\theta}{2}]$, then taking the limit $R \rightarrow +\infty$, we end up with 
\begin{equation}\label{I2}
\lim_{R \rightarrow + \infty} |I_{2,R} | = 0 .
\end{equation}
As for $I_3$, 
$$ I_{3,R} = \int_R^0 \frac{e^{i |x| (t + i\frac{\theta}{2})}}{1+(t+i\frac{\theta}{2})^2 + \gamma (t + i\frac{\theta}{2})} \ dt = -e^{-\theta|x|/2}  \int_0^R  \frac{e^{i |x| t }}{1+(t+i\frac{\theta}{2})^2 + \gamma (t + i\frac{\theta}{2})} \ dt ,$$
so there exists $C=C(\gamma)>0$ independent of $R$ such that
\begin{equation}\label{I3}
|I_{3,R}| \leq C \ e^{-\frac{\theta |x|}{2}} .
\end{equation}
Lastly for $I_{4,R} $, we get
\begin{eqnarray}\nonumber
\displaystyle I_{4,R} &=& i \int_{\frac{\theta}{2}}^0 \frac{e^{i |x| (i t)}}{1 + (it)^2 +\gamma i t} \ dt = -i \int_0^{\frac{\theta}{2}} \frac{e^{-|x| t}}{(1-t^2) + \gamma i t} \nonumber \\
\displaystyle &=& -i \int_0^{\frac{\theta}{2}} \frac{(1-t^2) - i \gamma t}{(1-t^2)^2 + \gamma^2 t^2} e^{-|x| t}\, dt \nonumber
\end{eqnarray}
and so
$$ \Re(I_{4,R} )= - \int_0^\frac{\theta}{2} \frac{\gamma t \ e^{-|x| t}}{(1-t^2)^2 + \gamma^2 t^2} \ dt .$$
Performing the change of variable $y=|x| t$, we get 
$$ \Re(I_{4,R} ) = -\frac{\gamma}{|x|^2} \int_0^{\frac{\theta|x|}{2}} \frac{y e^{-y}}{(1-\frac{y^2}{|x|^2})^2 + \gamma ^2 \frac{y^2}{|x|^2}} \ dy .$$
But, since $\displaystyle 0 \leq y \leq\frac{ \theta|x|}{2}$ then $\displaystyle \frac{1}{(1-\frac{y^2}{|x|^2})^2 + \gamma ^2 \frac{y^2}{|x|^2}} \leq \frac{1}{(1-\frac{y^2}{|x|^2})^2} \leq \frac{1}{1-\frac{\theta^2}{4}}\le \frac{4}{3}$. Besides, the fact that $\displaystyle \int y e^{-y} \ dy = e^{-y} (-y-1)$  ensures that 
\begin{equation}\label{I4}
|\Re(I_{4,R} )| \leq  C\,  \frac{\gamma}{|x|^2}\; .
\end{equation}
Finally, we get by \eqref{I2}, \eqref{I3}, and \eqref{I4} that 
\begin{equation} \label{decay}
|G(x)| \lesssim \frac{|\gamma|}{|x|^2} +C(\gamma) e^{-\frac{\theta |x|}{2}} 
\end{equation}
and so the decay of the even ground state $ \varphi_\gamma $ is given at least by $C_{\gamma} |x|^{-2}$, and that of $Q'_{\gamma}$ is given by $C_{\gamma} |x|^{-3}$.

In fact, to obtain the decay on $\varphi_\gamma' $, we are going to prove that $ G' $ decays at least as $ |x|^{-3}$. For this, we proceed exactly in the same way. We first notice that 
$$
\displaystyle G'(x) = \frac{-1}{\sqrt{2 \pi}} \int_{\mathbb{R}} \frac{\xi \sin(x \xi)}{1+ \alpha \xi^2 + \gamma |\xi|} \ d\xi  \nonumber
$$
 and for $x$ fixed, we set $\tilde{g}(z) = \tilde{g}(x,z) =\displaystyle \frac{-ze^{i |x| z}}{1+ z^2 + \gamma z} $ and we integrate the latter along $ \delta_R $.  Of course, $ \displaystyle \frac{\xi \sin(x \xi)}{1+ \alpha \xi^2 + \gamma |\xi|}$ does not belong to $L^1(\mathbb{R}^*_+) $. As the latter is no longer integrable on $\mathbb{R}$, we resort to oscillatory integrals where thanks to an integration by parts, it is direct to check that its generalized integral converges on $ \mathbb{R}_+ $. Therefore as above, it holds 
$$\sqrt{2\pi} \,  G'(x) = \lim_{R \rightarrow +\infty} \Im  \Bigl(\int_{\delta_{1,R}}  \tilde{g}(z) \ dz\Bigr)= \lim_{R \rightarrow +\infty} 
\Im\Bigl( - \int_{\delta_{2,R}}  \tilde{g}(z) \ dz- \int_{\delta_{3,R}}  \tilde{g}(z) \ dz - \int_{\delta_{4,R}}  \tilde{g}(z) \ dz\Bigr) \; .
$$ 
We set $ \displaystyle \tilde{I}_{k,R}=\int_{\delta_{k,R}}  \tilde{g}(z) \ dz $ for $ k=2,3,4$. We notice that $ \tilde{I}_{2,R} $ can be treated exactly as $ I_{2,R} $ to obtain $ \displaystyle \lim_{R\to +\infty} \tilde{I}_{2,R}=0$ and that 
$$ \tilde{I}_{3,R}  = -e^{-\theta|x|/2}  \int_0^R  \frac{(t+i\frac{\theta}{2})e^{i |x| t }}{1+(t+i\frac{\theta}{2})^2 + \gamma (t + i\frac{\theta}{2})} \ dt .$$
Integrating by parts, we get that there exists $C=C(\gamma)>0$ independent of $R$ such that for $ |x|\ge 1$, 
\begin{equation}\label{I3'}
|\tilde{I}_{3,R}| \leq C \,  e^{-\frac{\theta |x|}{2}} .
\end{equation}
As for $\tilde{I}_{4,R}$, we write 
$$
\displaystyle \tilde{I}_{4,R} = i \int_0^{\frac{\theta}{2}} \frac{(it) e^{-|x| t}}{(1-t^2) + \gamma i t} \nonumber \\
\displaystyle = - \int_0^{\frac{\theta}{2}} \frac{ t ( (1-t^2) - i \gamma t) }{(1-t^2)^2 + \gamma^2 t^2} e^{-|x| t}\, dt \nonumber
$$
so that 
$$ \Im(\tilde{I}_{4,R} )=  \int_0^\frac{\theta}{2} \frac{\gamma t^2 \ e^{-|x| t}}{(1-t^2)^2 + \gamma^2 t^2} \ dt .$$
Performing the change of variable $y=|x| t$, we get that 
$$|\Im(\tilde{I}_{4,R} )| \lesssim \gamma |x|^{-3} \ .$$
 Gathering the above estimates, we obtain the desired decay for $ G' $, and the identity  $\varphi_{\gamma}' = G' \ast \varphi_{\gamma}^2  $ together with  the decay of $\varphi_{\gamma}$ ensures that  $\varphi_{\gamma}' $ decays at least as $|x|^{-3} $. Consequently, the decay of $\varphi_{\gamma}''$ follows by noticing that 
 $\varphi_{\gamma}''= 2 G' \ast (\varphi_{\gamma}\varphi_{\gamma}') $.
\end{proof}

\end{document}